\newtheorem{theorem}{Theorem}
\theoremstyle{definition}
\newtheorem{lemma}{Lemma} 
\newtheorem{remark}{Remark}
\newcommand{\n}[1]{\left\|#1 \right\|} %------- norm
\renewcommand{\a}{\alpha}
\renewcommand{\b}{\beta}
\renewcommand{\d}{\delta}
\newcommand{\D}{\Delta}
\newcommand{\la}{\lambda}
\renewcommand{\t}{\tau}
\renewcommand{\th}{\theta}
\newcommand{\x}{\bar x}
\newcommand{\R}{\mathbb R}
\newcommand{\E}{\mathcal E}
\newcommand{\N}{\mathbb N}
\newcommand{\Z}{\mathbb Z}
\newcommand{\g}{\geq}
\renewcommand{\l}{\leq}
\newcommand{\lr}[1]{\left\langle #1\right\rangle} %----- triangle brackets
\newcommand{\nf}[1]{\nabla f(#1)}
\DeclareMathOperator{\prox}{prox}
\DeclareMathOperator{\argmin}{argmin}
\DeclareMathOperator{\dom}{dom}
\DeclareMathOperator{\id}{Id}
\DeclareMathOperator{\conv}{conv}
\begin{document}
\title{Proximal extrapolated gradient methods for variational inequalities}
\author{Yu. Malitsky\thanks{Institute for Computer Graphics and Vision, Graz University of Technology, 8010 Graz,
Austria. e-mail: yura.malitsky@icg.tugraz.at}}

\maketitle
\begin{abstract}
    The paper concerns with novel first-order methods for monotone
    variational inequalities. They use a very simple linesearch
    procedure that takes into account a local information of the
    operator. Also the methods do not require Lipschitz-continuity of
    the operator and the linesearch procedure uses only values of the
    operator. Moreover, when operator is affine our linesearch becomes
    very simple, namely, it needs only vector-vector multiplication. For all
    our methods we establish the ergodic convergence rate.  Although
    the proposed methods are very general, sometimes they may show
    much better performance even for optimization problems. The reason
    for this is that they often can use larger stepsizes without
    additional expensive computation. 
 \end{abstract}

{\small
\noindent
{\bfseries 2010 Mathematics Subject Classification:}
{47J20, 65K10, 	65K15, 65Y20, 90C33}

\noindent {\bfseries Keywords:}
variational inequality,  monotone operator,  linesearch, nonmonotone
stepsizes,\\ proximal methods, convex optimization, ergodic convergence.
}

\section{Introduction}
This paper considers a problem of the variational
inequality in a general form
\begin{equation}
    \label{gvi}
\text{find } x^*\in \E \colon \quad     \lr{F(x^*), x - x^* } + g(x) - g(x^*) \geq 0 \quad \forall x \in \E,
\end{equation}
where  $F\colon \E \to \E$ is a monotone operator and
$g\colon \E\to (-\infty, +\infty]$ is a convex function. This is an
important problem that has a variety of theoretical and practical
applications \cite{pang:03,konnov07,ekeland:76}.

The main iteration step of the proposed method is defined as follows
\begin{align*} 
    y_n & =  x_n + \t_n (x_n - x_{n-1}) \\
    x_{n+1} & = \prox_{\la_n g} (x_n - \la_n F(y_n)),
\end{align*}
where we define $\t_n$, $\la_n$, and $y_n$ from local properties of
$F(y_n)$. For this in each iteration we run some simple linesearch
procedure. We propose different procedures for different cases: for a
general problem \eqref{gvi}, for \eqref{gvi} with $g(x)=\d_C(x)$, and
for a case when $F$ is a gradient of a convex differentiable function.
Each iteration of the linesearch procedure requires only one value of
$F$ and function $g$ is not used at all. In contrast to many known
methods, we do not require monotonicity of stepsizes $(\la_n)$.  Also
in case when $F$ is affine our linesearch procedures needs only
vector-vector computation. Moreover, our analysis does not need a
Lipschitz assumption on $F$, only locally Lipschitz one.

Although we consider quite a general problem, our discussion presented
below consists from two separate parts devoted to the optimization
problems and variational inequality problems.  This is because we
noticed that for some difficult optimization problems our algorithm
may work much better than some existing methods.
 Next section after the introduction devotes to studying of our two methods.
We show their globally convergence, consider some particular cases and
establish complexity rates. In Section~\ref{sec:alg3} we consider a
problem of composite minimization for which we improve one of our
methods. In Section~\ref{sec:comparison} we study some known linesearch
procedures and make numerical illustrations of our methods with
several popular methods.

\subsection{Preliminaries}\label{prelim}
In what follows, $\E$ denotes a finite-dimensional real vector space
with inner product $\lr{\cdot,\cdot}$ and norm $\n{\cdot}$, $\nabla f
$ denotes a gradient of a smooth function $f$.  For a proper lower
semicontinuous convex function $g\colon \E\to (-\infty, \infty]$ we
denote its domain by $\dom g$, i.e., $\dom g:= \{x\in \E\colon g(x) <
\infty \}$.  The proximal operator $\prox_g\colon \E\to \E$ is defined
as
\begin{equation*}
    \prox_g(y) := \argmin_{x\in \E}\bigl\{g(x) + \frac 1 2 \n{x-y}^2\bigr\}.
\end{equation*}
For a set $C$, we denote by $\delta_C$ the indicator function of the
set, that is, $\delta_C(x) = 0$ if $x \in C$ and $\infty$
otherwise. A metric projector onto $C$ we denote as $P_C$. Clearly, by
definition, $P_C x = \prox_{\d_C}x$.

The operator $F$ is called monotone if $$\lr{F(x)-F(y),x-y}\geq
0\quad \forall x,y \in \E.$$

\subsection{Optimization perspective}
Consider the following problem of composite minimization
\begin{equation}
    \label{min}
    \min_x \Phi(x):= f(x) + g(x),
\end{equation}
where $f$ is a differentiable convex function, and $g$ is a proper
lower semicontinuous convex function.  Such formulation assumes that
we know the structure of the underlying function $\Phi$. It is not
difficult to verify that the first order optimality conditions of
\eqref{min} are a particular case of \eqref{gvi} with $F = \nabla{f}$.

Problem
\eqref{min} is rich enough to encompass many important applications in machine learning, image
processing, compressed sensing, statistics, etc.
% For example, the problems of constrained
%minimization, barrier representation of the feasible set, $l_1$ and $l_2$ regularization are
%particular cases of \eqref{min}. For more details concerning importance and possible applications of
%\eqref{min} we refer the reader to
\cite{nesterov07,boyd:proximal,combettes:11,tseng08,fista,cevher:14,combettes08}.
Although first-order methods for problem~\eqref{min} have a long history, they continue to receive
much attention from optimization community. Many real-life applications are large-scale and in this
case first-order methods often outperform other methods such as interior point methods, Newton methods,
since the iterations of the former are much cheaper and do not depend on the dimension of the
problem as much as the latter do. 

Under the assumption that $\nabla f$ is Lipschitz-continuous, i.e., there exists some $L>0$ such that
\begin{equation}\label{lipschitz}
\n{\nf{x} - \nf{y}}\l L \n{x-y},       
\end{equation}
one of the most simple methods for solving~\eqref{min} is the proximal gradient  method that
generates $(x_n)$ as
\begin{equation}
    \label{clas:prox}
    x_{n+1} = \prox_{\la_n g}(x_n  - \la_n \nf{x_n}),
\end{equation}
where $\la_n \in (0, \frac 2 L)$. 

There are several methods \cite{tseng00,cruz15} that do not require
condition~\eqref{lipschitz}. Our linesearch procedure in some sense is
similar to them but is cheaper since it does not use a proximal
mapping. The more extended discussion concerning this will be
presented in Section~\ref{sec:comparison}.  We underline that
problems, where \eqref{lipschitz} does not hold, take place, for
example, in barrier methods, entropy maximization, geometric
programming, image processing
\cite{cevher:14,cevher:14_2,nesterov07,borwein_zhu,boyd:convex_optim,pustelnik2010proximal}.

We also have to mention a very important class of two-step proximal
gradient methods that include inertial (heavy ball) methods introduced
by Polyak in~\cite{polyak}  and accelerated proximal methods,
pioneered by seminal work of Nesterov \cite{nesterov1983} and further
developed in~\cite{nesterov07,fista,tseng08} for a problem of
composite minimization.  This class enjoys an improved convergence
rate compared with classical proximal gradient
method~\eqref{clas:prox}. For all these methods
condition~\eqref{lipschitz} is also important.

In order to see why assumption \eqref{lipschitz} is so crucial for most optimization methods,
consider \eqref{clas:prox} in more detail.  There are two classical approaches of deriving the
proximal gradient method. The first one consists in the interpretation
of \eqref{clas:prox} with fixed $\la = \la_n\in (0,\frac 2 L)$ as 
forward-backward method. Condition~\eqref{lipschitz} is necessary to establish that operator
$\id - \la \nabla f$ is firmly nonexpansive (Baillon-Haddad theorem~\cite{baucomb}). After this we
can simply deduce that $\prox_{\la g}(\id - \la \nabla f)$ is averaged. Then a convergence of $(x_n)$
to a minimizer of $\Phi(x)$ follows from the celebrated Krasnosel'skii-Mann theorem~\cite{baucomb}.

In order to  derive \eqref{clas:prox} in a different way, we need  the following inequality
\begin{equation}\label{dl}
    |f(y)  - f(x) - \lr{\nf{x},y-x}|\l \frac{L}{2}\n{x-y}^2
\end{equation}
that is well-known as descent lemma \cite{baucomb}. Note that this
lemma holds for any smooth function $f$ that
satisfies~\eqref{lipschitz}.  We point out that analysis of
accelerated proximal methods is much more sophisticated and it can not
be interpreted as forward-backward splitting iteration. Nevertheless
one of the main ingredients in their analysis is inequality~\eqref{dl}
and hence the assumption~\eqref{lipschitz} is also necessary for them.

Among first-order methods there has been always some trade-off between
methods with fixed stepsize and ones with variable stepsizes. The
former are simpler and require less computation per iteration,
however, they require to know the Lipschitz constant $L$. Usually we
are able only to estimate this constant from the above (and even this
task sometimes can be very challenged), moreover this estimation is
often quite conservative, so the method will use tiny steps. Methods
with variable stepsizes in each iteration run some linesearch
procedure in order to find appropriate stepsize. They are more
flexible and usually allow to use larger steps than what is predicted
by the Lipschitz constant. At this moment there are a lot of possible
linesearch procedures and adaptivity techniques for \eqref{clas:prox}
or some particular cases of \eqref{clas:prox}, see
\cite{scheinberg14,bb-gradient,birgin-raydan,tseng00,nesterov07,fista,cruz15,donoghue-candes:2013,lin-xiao:2014}.
 
Our method seems to fill in this trade-off: it is very simple, it uses
variable and nonmonotone stepsizes, and linesearch procedure is quite
cheap and flexible. This significantly differs it from the most known
methods. For example, each inner iteration of the popular Armijo-like
linesearch procedure for \eqref{clas:prox} proposed in~\cite{fista}
requires evaluation of $f$ and $\prox_{\la_n g}$. Moreover, in order
to provide convergence of generated sequence $(x_n)$, the sequence of
stepsizes $(\la_n)$ must be nonincreasing.

On the other hand, the flexibility of stepsizes $(\la_n)$ in our
methods causes difficulties to get a nonergodic convergence rate of the proposed
algorithms. We hope the presence of numerical experiments in the
paper makes this lack up. Roughly speaking, the general picture of
applicability of our methods is the following. In cases when local
Lipschitz constant of $\nabla f$ changes drastically, that is $f$ has
very different curvature in different directions, then a global
Lipschitz constant can not be a good prediction and our methods will
benefit from using the local information of $\nabla f$. In turn, when
$\nabla f$ is rather flat, i.e.~local Lipschitz constant of $\nabla f$
does not change too much, our method will be in the worse case
comparing to other methods, since the latter allow to take stepsizes larger
or/and they may enjoy a better complexity rate.
Clearly, the first case is the most difficult in optimization, since
it includes problems with highly nonlinear $\nabla f$ or linear but
ill-conditioned $\nabla f$.

\subsection{Variational inequality perspective} This subsection
concerns with a more general case when $F$ is not a gradient of a
convex function. A general approach to solve \eqref{gvi} consist in solving of a
sequence of the simpler variational
inequalities~\cite{cohen,harker-pang}. We concentrate ourselves on the
most simple case of this approach: projected (proximal) methods.

When, for example, $F$ satisfies cocoercivity
assumption (that is stronger than just monotonicity) then most methods
from optimization framework can be still applied to this case. In
particular, this holds for the proximal gradient method
(forward-backward method) \cite{passty79,lions-mercier},
inertial method \cite{moudafi:03,pock:inertial}. However, those
methods do not converge when $F$ is monotone.

When $g(x) = \d_C(x)$, variational inequality \eqref{gvi} reduces to
\begin{equation}
    \label{vip}
    \lr{F(x^*), x - x^* }  \geq 0 \quad \forall x \in C,
\end{equation}
where $C\subseteq \E $ is a closed convex set.
For this specific case Korpelevich \cite{korpel:76} proposed the
extragradient method

\begin{gather}
\begin{aligned}\label{korpel}
    y_{n} & =P_C(x_n-\la F(x_n))\\
    x_{n+1} & =P_C(x_n-\la F(y_n)),
\end{aligned}
\end{gather}
where $\la \in (0,\frac 1 L)$. A bit different approach was proposed 
 by Popov \cite{popov:80}
\begin{gather}
\begin{aligned}\label{popov}
x_{n+1} &=P_C(x_n-\la F(y_n))\\
y_{n+1} & =P_C(x_{n+1}-\la F(y_n)),
\end{aligned}
\end{gather}
where $\la \in (0,\frac{1}{3L}]$.
Note that the latter method needs only one value of $F$ per iteration,
though it uses a smaller stepsize.

Both Korpelevich' and Popov's methods gave birth to a fruitful
research
\cite{mal-sem,malitsky_journ_glob_opt,lyashko11,khobotov,iusem:97,reich:2011,solodov1996modified,solodov:1999,semenov:extra_lipschitz,gibali15}
where there have been proposed different improvements: linesearch
procedures or/and avoiding of Lipschitz-continuity assumption,
decreasing a number of metric projections, etc. Actually, the basic schemes
\eqref{korpel} and \eqref{popov} can be applied to a general problem
\eqref{gvi}. However, this is not always the case for their
extensions.

In turn, problem~\eqref{gvi} can be formulated as a more general
problem of a monotone inclusion. In this case one may apply the
Tseng's forward-backward-forward method~\cite{tseng00}
\begin{gather}
\begin{aligned}\label{tseng2}
    y_n &  = \prox_{\la g}(x_n-\la F(x_n))\\
     x_{n+1} & = y_n +\la (F(x_n)-F(y_n)),
 \end{aligned}
\end{gather}
where $\la \in (0, \frac{1}{L})$. Note that the linesearch proposed in
the same paper~\cite{tseng00} allows us to require only continuity of
$F$. However, even with fixed steps the method uses two values
of $F$ per iteration. 

As in the previous subsection, the algorithms for \eqref{gvi} or \eqref{vip} that have practical
interest use some linesearch procedures to find $\la_n$ in each
iteration. The most popular choice is the Goldshtein-Armijo-type
stepsize rule
\cite{khobotov,tseng00,solodov:1999,solodov1996modified}, which requires  evaluation of $F$
and $\prox_{g}$ in each of inner iterations.  For method
\eqref{tseng2} we will consider such
implementation  in more details in Section~\ref{sec:comparison}.

Recently, in \cite{malitsky15} there was proposed the reflected projected gradient
method for problem~\eqref{vip}. When stepsize $\la$ is fixed, it generates a sequence $(x_n)$
by
$$x_{n+1} = P_C(x_n - \la F(2x_n - x_{n-1})),$$
where $\la \in (0, \frac{\sqrt 2 - 1}{L})$. This scheme is much
simpler than \eqref{korpel}, \eqref{popov}, or \eqref{tseng2} but the most important
that it gives a very efficient way to incorporate a linesearch
procedure. In~\cite{malitsky15} one of such ideas was applied and
numerical results approved its efficiency. However, the proposed
scheme was quite complicated and one of the goals of this paper is to
propose simpler schemes that, in addition, can be applied to a more 
general problem than \eqref{vip}.

\section{Main part} \label{sec:main}
The following assumptions are made throughout the paper:
\begin{itemize}
    \item[{\bf A1}] $F\colon \E\to \E$ is locally Lipschitz
    continuous and monotone.
    \item[{\bf A2}] $g\colon \E \to (-\infty, \infty]$ is proper l.s.c.  convex function.
    \item[{\bf A3}] $g |_{\dom g}$ is a continuous function.
    \item[{\bf A4}] The solution set of \eqref{gvi}, denoted by $X^*$, is nonempty.
\end{itemize}
The assumption A3 seems to be not quite usual, though it is very
general. Clearly, it fulfills for any $g$ with open $\dom g$ (this
includes finite-valued functions) or for an indicator $\d_C$ of some
closed convex set $C$. Moreover, when $\E = \R$ A2 implies A3
(Corollary 9.15, \cite{baucomb}). By this, every  separable function
that satisfies A2 also satisfies A3.

The following two lemmas are classical. For their proofs we refer to~\cite{baucomb}.
\begin{lemma}\label{prox_charact}
Let $g\colon \E\to (-\infty,+\infty]$ be a convex function, $x\in \E$. Then
$p = \prox_{g}(x)$ if and only if
$$\lr{p-x, y - p}\g g(p) - g(y)\quad \forall y\in \E.$$
\end{lemma}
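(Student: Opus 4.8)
The plan is to unwind the definition of $\prox_g(x)$ as the unique minimizer of the strongly convex function $h(z) := g(z) + \frac 1 2 \n{z-x}^2$ and to recognize the stated inequality as the first-order optimality condition for this minimization, written without explicit reference to the subdifferential. I would prove the two implications separately, in each case exploiting the exact expansion $\n{z-x}^2 - \n{p-x}^2 = \n{z-p}^2 + 2\lr{p-x,z-p}$, which is the only algebraic identity needed.

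For the forward implication, assume $p=\prox_g(x)$, fix an arbitrary $y\in\E$, and test the minimality of $p$ against the convex combinations $p_t := (1-t)p + ty$ for $t\in(0,1]$. Convexity of $g$ gives $g(p_t)\l (1-t)g(p) + t g(y)$, while expanding $\n{p_t - x}^2$ produces a term linear in $t$ with coefficient $\lr{p-x,y-p}$ plus a term of order $t^2$. Substituting both into the minimality relation $h(p)\l h(p_t)$, cancelling the common quantities, dividing by $t$, and letting $t\to 0^+$ to kill the quadratic remainder yields $g(p) - g(y) \l \lr{p-x,y-p}$, which is exactly the claim.

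For the converse, assume the inequality holds for every $y$. Using the same norm expansion I would write, for arbitrary $y\in\E$,
$$h(y) - h(p) = \bigl(g(y) - g(p) + \lr{p-x,y-p}\bigr) + \tfrac 1 2 \n{y-p}^2.$$
The hypothesis forces the bracketed term to be nonnegative, so $h(y) \g h(p) + \frac 1 2 \n{y-p}^2 \g h(p)$; hence $p$ minimizes $h$, i.e.\ $p=\prox_g(x)$, and the strict gap $\frac 1 2\n{y-p}^2$ incidentally re-confirms uniqueness of the minimizer.

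I do not expect any genuine obstacle here: the result is classical and the whole argument rests on one elementary quadratic identity together with convexity of $g$. The only point requiring a little care is the limiting step $t\to 0^+$ in the forward direction, where one must check that the $O(t^2)$ contribution from the squared norm vanishes so that only the linear-in-$t$ inner-product term survives. This is also what makes the convex-combination (directional) argument preferable to a bare subgradient computation, since it stays entirely within the primitive definition of $\prox_g$ and needs no subdifferential calculus.
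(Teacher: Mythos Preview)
Your argument is correct and entirely self-contained; both directions are handled cleanly, and the quadratic identity you single out is exactly what makes the computation go through. One minor remark: in the forward implication you should note (even if only implicitly) that the inequality is vacuous when $g(y)=+\infty$, so the convex-combination argument is only needed for $y\in\dom g$, where $g(p_t)<\infty$ and the division by $t$ is legitimate.

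As for comparison with the paper: the paper does not actually prove this lemma. It declares Lemmas~\ref{prox_charact} and~\ref{sol_char} to be classical and simply refers the reader to~\cite{baucomb}. Your proposal therefore supplies a proof where the paper gives none; the approach you take is the standard one and is essentially what one finds in the cited reference.
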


\begin{lemma}\label{sol_char}
Let $\la >0$ and (A2) holds. Then $x^*$ is a solution of \eqref{gvi} if and only if
 \begin{equation*}
     x^* = \prox_{\la g}(x^* - \la F(x^*)).
 \end{equation*}
\end{lemma}
Next lemma is obvious.
\begin{lemma}\label{lim_seq}
  Let $(a_n)$, $(b_n)$ be two nonnegative real sequences such that
$$a_{n+1}\leq a_n - b_n.$$
Then $(a_n)$ is bounded and $\lim_{n\to \infty}b_n = 0$.
\end{lemma}

\subsection{Algorithm 1} \label{sec:alg1}
Firstly, for simplicity, we consider a particular case of \eqref{gvi}
when $g(x)=\d_C(x)$ for a closed convex set $C\subseteq \E$. Now the
problem becomes to find $x^*\in C$ such that
\begin{equation}
    \label{vip2}
    \lr{F(x^*), x - x^* }  \geq 0 \quad \forall x \in C.
\end{equation}

% \fbox{
% \begin{minipage}[t]{1.0\linewidth}
% \vspace{5pt}
% \quad \text{\bf Algorithm 1 }
% \vspace{0.5cm}
% \begin{description}
% \item[Initialization.] Choose
%  $\a \in(0, \sqrt 2 -1)$, $\la_{max} > 0$, $\sigma \in (0,1)$,
%  $x_1,x_0, y_0 \in \E$, $\la_0 > 0$. Set $\t_0 = 1$.

% \item[Step 1.]  \textbf{Linesearch} \par 

% For given $x_n$, $x_{n-1}$, $y_{n-1}$, $\la_{n-1}$,
% $\t_{n-1}$ take $i = 0$ and run \\
% \texttt{repeat}

% \qquad  Take $\t_n = \sigma^i$ and $y_n = x_n + \t_n (x_n -
% x_{n-1})$.

% \qquad Choose the largest $\la_n \in (0, \min\{\frac{1+\t_{n-1}}{\t_n}
% \la_{n-1}, \la_{max}\} ]$ such that
% \begin{equation}\label{lipsch_main}
% \n{\la_n F(y_n)- \la_{n-1} \t_n F(y_{n-1})}\leq \a \n{y_n - y_{n-1}}.
% \end{equation}
% \qquad \texttt{break if } such $\la_n$ exists.

% \qquad \texttt{else:}
% \qquad $i = i+1$.
% %\texttt{return} $x_n$, $y_n$, $\la_n$, $\t_n$, $F(y_n)$.
% \item[Step 2.] Compute
% \begin{equation} \label{xn_main}
% x_{n+1} = P_C (x_n - \la_n F(y_n)).
% \end{equation}
% Set $n = n+1$ and go to Step 1.
% \end{description}
% \end{minipage}
% } \vspace{5pt}

%#########
\begin{algorithm}[!ht]\caption{\textit{}}
    \label{alg:A1}
\begin{algorithmic}
   \STATE {\bfseries Initialization:} 
   Choose  $\a \in(0, \sqrt 2 -1)$, $\la_{max} > 0$, $\sigma \in (0,1)$,
 $x_1,x_0, y_0 \in \E$, $\la_0 > 0$. Set $\t_0 = 1$. 
  \STATE {\bfseries Main iteration:} 
   	\STATE 1. For given $x_n$, $x_{n-1}$, $y_{n-1}$, $\la_{n-1}$,
$\t_{n-1}$ set $i = 0$ and run
	\STATE ~~~ \textbf{Linesearch:}
    \STATE ~~~1.a. Take $\t_n = \sigma^i$ and $y_n = x_n + \t_n (x_n -
x_{n-1})$.
	\STATE ~~~1.b. Choose the largest $\la_n \leq \min\{\frac{1+\t_{n-1}}{\t_n}
\la_{n-1}, \la_{max}\} $ such that
\begin{equation}\label{lipsch_main}
\n{\la_n F(y_n)- \la_{n-1} \t_n F(y_{n-1})}\leq \a \n{y_n - y_{n-1}}.
\end{equation}
	\vspace{-3ex}
	\STATE ~~~1.c. Break linesearch if such $\la_n$ exists. Otherwise,
    set $i:= i+1$ and go to 1.a.
	\STATE ~~\textbf{End of linesearch}
    \STATE 2. Compute $x_{n+1} = P_C (x_n - \la_n F(y_n))$.
\STATE{\bfseries Output:}  
Return $x_n$ and $y_n$.
\end{algorithmic}
\end{algorithm}
% End of the algorithm.
%%%%%%%%%%%%%%%%%%%%%%%%%%%%%%%%%%%%%%%%%%%

We need  $\la_n \leq \la_{max}$ to
ensure that $(\la_n)$ is bounded.  Inequality \eqref{lipsch_main} gives
us something similar to an estimation that we usually get from
Lipschitz continuity of $F$. It is easy to see that finding
the largest $\la_n$ that satisfies \eqref{lipsch_main} is equivalent
to solving a quadratic equation, thus it can be found explicitly. 
Note that update of the inner loop requires only computation of
$F$. 

%%%Algorithm 1. End
First, let us show that Algorithm~1 is well-defined.
\begin{lemma} \label{well-def1}
    The linesearch in Algorithm~1 always terminates.
\end{lemma}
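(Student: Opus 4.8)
The plan is to argue by contradiction, assuming the linesearch never terminates, so that the inner counter $i$ grows without bound and hence $\t_n = \sigma^i \r 0$. The first thing I would record is that along such a failing run the trial point $y_n = x_n + \t_n(x_n - x_{n-1})$ converges to $x_n$; in particular all the trial points $y_n$ generated for $i = 0,1,2,\dots$ lie on the bounded segment joining $x_n$ and $2x_n - x_{n-1}$, and together with the fixed point $y_{n-1}$ they all belong to a single compact set $K$ that does not depend on $i$.

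The decisive step is to convert the local Lipschitz hypothesis (A1) into a uniform bound on $K$. Since $F$ is locally Lipschitz and $K$ is compact, a standard finite-subcover argument (combined with the boundedness of $F$ on $K$ to handle pairs that are far apart) yields a single constant $L$ with $\n{F(u)-F(v)} \l L\n{u-v}$ for all $u,v \in K$. With $L$ in hand I would simply exhibit one admissible stepsize satisfying \eqref{lipsch_main} for all large $i$, which contradicts non-termination. The natural candidate is $\la_n = \la_{n-1}\t_n$: it is admissible because $\la_{n-1}\t_n \l \frac{1+\t_{n-1}}{\t_n}\la_{n-1}$ reduces to $\t_n^2 \l 1 + \t_{n-1}$, which always holds, while $\la_{n-1}\t_n \l \la_{max}$ holds once $\t_n$ is small. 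Crucially, with this choice the left-hand side of \eqref{lipsch_main} collapses to $\la_{n-1}\t_n\n{F(y_n)-F(y_{n-1})}$, which by the Lipschitz bound is at most $\la_{n-1}\t_n L\n{y_n - y_{n-1}}$; since $\la_{n-1}\t_n L \l \a$ as soon as $\t_n \l \a/(\la_{n-1}L)$, the required inequality $\n{\la_n F(y_n) - \la_{n-1}\t_n F(y_{n-1})} \l \a\n{y_n - y_{n-1}}$ is verified.

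Choosing $i$ large enough that both smallness conditions on $\t_n$ hold then produces an admissible $\la_n$ satisfying \eqref{lipsch_main}, so the linesearch would already have stopped — the desired contradiction. I expect the main obstacle to be the technical extraction of the uniform constant $L$ from mere local Lipschitz continuity; everything else is elementary, though one should also note the degenerate possibility $y_n = y_{n-1}$, which is harmless since then both sides of \eqref{lipsch_main} vanish for the candidate stepsize.
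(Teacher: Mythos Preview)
Your proposal is correct and follows essentially the same route as the paper: argue by contradiction, observe that all trial points $y_n$ together with $y_{n-1}$ lie in a fixed bounded set (the paper takes $D=\conv\{x_n,\,2x_n-x_{n-1},\,y_{n-1}\}$), invoke local Lipschitz continuity of $F$ on that set to obtain a uniform constant $L$, and then set $\la_n=\t_n\la_{n-1}$ with $\t_n$ small enough that \eqref{lipsch_main} holds. Your version is in fact a bit more careful than the paper's, since you explicitly verify the admissibility constraints $\la_n\le\frac{1+\t_{n-1}}{\t_n}\la_{n-1}$ and $\la_n\le\la_{max}$ and address the degenerate case $y_n=y_{n-1}$.
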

\begin{proof}
    Suppose the assertion of the lemma is false.  Let $D = \conv\{x_n,
    2x_n - x_{n-1}, y_{n-1} \}$. Since $F$ is locally
    Lipschitz-continuous, it is Lipschitz-continuous on $D$ (because
    $D$ is a bounded set). Hence, there exists $L$ such that
    $$\n{F(y_n) - F(y_{n-1})}\leq L \a \n{y_n - y_{n-1}}.$$
    Note that $y_n \in D$ for any $\t_n \in (0,1]$. Then, in order to get a contradiction, it remains to take $\t_n < \frac{1}{\la_{n-1}L}$ and set
    $\la_n = \t_n \la_{n-1}$.
\end{proof}

\begin{lemma}
    \label{basic_lemma1}
    For $(x_n)$, $(y_n)$, generated by Algorithm 1, and $x\in C$ the
    following inequality holds
\begin{multline}\label{alg1:ineq-1}
 \n{x_{n+1}-x}^2 \l 
    \n{x_n-x}^2 + 2\a\n{y_n-y_{n-1}}\n{x_{n+1}-y_n}
    \\ - 
\n{y_n-x_n}^2 - \n{x_{n+1}-y_n}^2  - 2\la_n \lr{F(y_n), y_n -x}.
\end{multline}
\end{lemma}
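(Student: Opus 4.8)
The plan is to combine the optimality (prox) characterizations of the two most recent projection steps with the linesearch estimate \eqref{lipsch_main}. First I would apply Lemma~\ref{prox_charact} to $x_{n+1} = P_C(x_n - \la_n F(y_n)) = \prox_{\d_C}(x_n - \la_n F(y_n))$ with the test point $x\in C$: since $\d_C(x) = \d_C(x_{n+1}) = 0$, this yields $\lr{x_{n+1} - x_n + \la_n F(y_n),\, x - x_{n+1}} \g 0$. Plugging this into the elementary identity $\n{x_{n+1}-x}^2 = \n{x_n - x}^2 - \n{x_{n+1}-x_n}^2 + 2\lr{x_{n+1}-x_n,\, x_{n+1}-x}$ gives $\n{x_{n+1}-x}^2 \l \n{x_n-x}^2 - \n{x_{n+1}-x_n}^2 - 2\la_n\lr{F(y_n),\, x_{n+1}-x}$.

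Next I would split the two surviving terms around $y_n$. Writing $x_{n+1}-x = (x_{n+1}-y_n)+(y_n-x)$ isolates the gap term $-2\la_n\lr{F(y_n), y_n - x}$ that appears in the conclusion, while expanding $\n{x_{n+1}-x_n}^2 = \n{y_n - x_n}^2 + 2\lr{y_n-x_n,\, x_{n+1}-y_n} + \n{x_{n+1}-y_n}^2$ produces the two negative square terms of \eqref{alg1:ineq-1}. What remains to estimate is the single cross term $S := -2\lr{x_{n+1}-y_n,\, (y_n-x_n) + \la_n F(y_n)}$, which must be bounded by $2\a\n{y_n-y_{n-1}}\n{x_{n+1}-y_n}$.

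The heart of the argument is to rewrite the vector $(y_n - x_n) + \la_n F(y_n)$ using the two defining relations of the method. Since $y_n - x_n = \t_n(x_n - x_{n-1})$ and $\la_n F(y_n) = \la_{n-1}\t_n F(y_{n-1}) + \bigl(\la_n F(y_n) - \la_{n-1}\t_n F(y_{n-1})\bigr)$, I would group it as $\t_n\bigl[(x_n - x_{n-1}) + \la_{n-1}F(y_{n-1})\bigr] + r_n$, where $r_n := \la_n F(y_n) - \la_{n-1}\t_n F(y_{n-1})$. The piece carrying $r_n$ is controlled at once by Cauchy--Schwarz and the linesearch test \eqref{lipsch_main}, namely $-2\lr{x_{n+1}-y_n,\, r_n} \l 2\n{x_{n+1}-y_n}\,\n{r_n} \l 2\a\n{y_n-y_{n-1}}\n{x_{n+1}-y_n}$, which is precisely the positive term in the conclusion.

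The remaining piece, $-2\t_n\lr{x_{n+1}-y_n,\, (x_n - x_{n-1}) + \la_{n-1}F(y_{n-1})}$, is where I expect the main difficulty, and it is to be handled by invoking optimality of the \emph{previous} step. Applying Lemma~\ref{prox_charact} to $x_n = P_C(x_{n-1} - \la_{n-1}F(y_{n-1}))$ with test point $x_{n+1}\in C$ gives $\lr{(x_n - x_{n-1}) + \la_{n-1}F(y_{n-1}),\, x_{n+1}-x_n} \g 0$. The obstacle is that $x_{n+1}-y_n$ is not itself of the admissible form $y - x_n$ with $y\in C$; writing $x_{n+1}-y_n = (x_{n+1}-x_n) - \t_n(x_n - x_{n-1})$ separates this cross term into the favourable part killed by the previous-step inequality and a residual proportional to $\t_n(x_n-x_{n-1})$, whose sign must be absorbed using $\t_n\in(0,1]$ together with the stepsize cap $\la_n \l \frac{1+\t_{n-1}}{\t_n}\la_{n-1}$ and the restriction $\a < \sqrt2 - 1$. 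Carefully tracking these constants is the crux; once that residual term is shown to be nonpositive (or absorbed into the admissible slack), summing the three estimates yields exactly \eqref{alg1:ineq-1}.
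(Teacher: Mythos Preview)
Your decomposition is essentially the paper's, and everything up to the last paragraph is correct. The gap is in how you plan to handle the ``residual.'' You write $x_{n+1}-y_n = (x_{n+1}-x_n) - \t_n(x_n-x_{n-1})$ and correctly note that the first piece is killed by the previous-step inequality at the test point $x_{n+1}$. But the second piece does \emph{not} need to be absorbed by $\t_n\le 1$, the stepsize cap, or $\a<\sqrt 2-1$: none of those constraints appear anywhere in this lemma. The missing observation is that $x_{n-1}\in C$ as well, so you may apply Lemma~\ref{prox_charact} to the previous step a \emph{second} time with test point $x_{n-1}$, obtaining
\[
\lr{x_n-x_{n-1}+\la_{n-1}F(y_{n-1}),\,x_{n-1}-x_n}\ge 0,
\]
which says exactly that your residual $2\t_n^2\lr{x_n-x_{n-1},\,(x_n-x_{n-1})+\la_{n-1}F(y_{n-1})}$ is nonpositive. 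With this, the entire cross term $-2\t_n\lr{x_{n+1}-y_n,(x_n-x_{n-1})+\la_{n-1}F(y_{n-1})}$ is $\le 0$ and can simply be dropped.

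The paper organizes this more cleanly by taking both instantiations of the previous-step inequality up front---at $x_{n+1}$ and at $x_{n-1}$---and forming the combination $(\text{first})+\t_n\cdot(\text{second})$, which directly yields $\lr{x_n-x_{n-1}+\la_{n-1}F(y_{n-1}),\,x_{n+1}-y_n}\ge 0$. After multiplying by $\t_n$ and using $\t_n(x_n-x_{n-1})=y_n-x_n$, this is added to the current-step inequality; the cosine rule and the linesearch bound \eqref{lipsch_main} then give \eqref{alg1:ineq-1} immediately. So your route is the same argument unpacked, but the ``careful tracking of constants'' you anticipate is a red herring: those constraints are used only later, in Lemma~\ref{lemma:main_ineq} and Theorem~\ref{main_th}.
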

\begin{proof}
        By Lemma~\ref{prox_charact},
    \begin{equation}
        \label{p1}
          \lr{x_{n+1} - x_{n} + \la_n F(y_{n}), x-x_{n+1}} \g 0\quad
          \forall x\in C.
    \end{equation}
Similarly, for the previous iterate we have
\begin{equation*}
    \lr{x_{n} - x_{n-1} + \la_{n-1} F(y_{n-1}), x - x_n} \g 0 \quad
    \forall x\in C. 
\end{equation*}
Taking in the above inequality $x = x_{n+1} \in C$ and then $x =
x_{n-1} \in C$, we obtain
\begin{align}
    \lr{x_{n} - x_{n-1} + \la_{n-1} F(y_{n-1}), x_{n+1} - x_n} & \g
    0, \label{prev1}\\
        \lr{x_{n} - x_{n-1} + \la_{n-1} F(y_{n-1}), x_{n - 1} - x_n}
        & \g 0. \label{prev2}
\end{align}
Multiplying~\eqref{prev2} by $\t_n$ and adding it to \eqref{prev1} gives us
\begin{equation*}
    \lr{x_{n} - x_{n-1} + \la_{n-1} F(y_{n-1}), x_{n+1}-y_{n}} \g 0 .
\end{equation*}
From $\t_n(x_n - x_{n-1}) = y_n - x_n$ we get
\begin{equation}
    \label{p2}
    \lr{y_n - x_{n}  + \t_n \la_{n-1} F(y_{n-1}), x_{n+1}-y_{n}} \g 0 .
\end{equation}
Summing \eqref{p1} and \eqref{p2}, we obtain
\begin{multline}
    \label{e4}
 \lr{x_{n+1} - x_n , x-x_{n+1}} + \lr{y_{n} - x_{n}, x_{n+1}-y_{n}} \\
 + \lr{\la_n F(y_n) - \t_n \la_{n-1}F(y_{n-1}), y_n -
 x_{n+1}} \g \la_n \lr{F(y_n), y_n-x} \quad \forall x\in C.   
\end{multline}
By the cosine rule, we derive
\begin{multline} 
    \n{x_n -x}^2 -  \n{x_{n+1}-x}^2 -  \n{y_n-x_n}^2
    - \n{x_{n+1}-y_n}^2 \\ +2 \lr{\la_n F(y_n) - \t_n \la_{n-1}F(y_{n-1}), y_n -
 x_{n+1}} \geq 2 \la_n \lr{F(y_n), y_n-x} \quad \forall x\in C.
\end{multline}
Taking into account \eqref{lipsch_main}, we get the desired inequality
\eqref{alg1:ineq-1}.
\end{proof}
 
\begin{lemma}    \label{separ}
Assume that $(x_n)$, generated by Algorithm 1, is bounded. Then  $\limsup_{n\to \infty} \la_{n}
> 0$.
\end{lemma}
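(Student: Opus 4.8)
The plan is to argue by contradiction. Suppose the conclusion fails; since every $\la_n$ returned by the linesearch is strictly positive (the witness $\t_n\la_{n-1}$ used in Lemma~\ref{well-def1} is positive), the negation $\limsup_n \la_n \l 0$ is the same as $\la_n \to 0$. First I would exploit the standing boundedness hypothesis on $(x_n)$: because $\t_n \in (0,1]$, each $y_n = x_n + \t_n(x_n - x_{n-1})$ lies on the segment joining $x_n$ and $2x_n - x_{n-1}$, so $(y_n)$ is bounded as well. By assumption A1, $F$ is locally Lipschitz, hence Lipschitz on any fixed bounded set; let $L$ be a Lipschitz constant of $F$ on a ball containing all the $y_n$. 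This $L$ is the uniform constant that the local-Lipschitz-only hypothesis would otherwise deny us, and producing it is exactly where boundedness of $(x_n)$ enters.

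The mechanism I would then isolate is the behaviour of the linesearch at its very first inner step $i=0$, where $\t_n = 1$ and $y_n = 2x_n - x_{n-1}$. I claim the ``default'' choice $\la_n = \la_{n-1}$ passes the test~\eqref{lipsch_main} as soon as $\la_{n-1} L \l \a$. Indeed, with $\t_n = 1$,
$$\n{\la_{n-1}F(y_n) - \la_{n-1}F(y_{n-1})} \l \la_{n-1}L\n{y_n - y_{n-1}} \l \a \n{y_n - y_{n-1}},$$
and $\la_{n-1}$ respects the cap in step~1.b, since $\la_{n-1}\l (1+\t_{n-1})\la_{n-1}$ (as $\t_{n-1}\g 0$) and $\la_{n-1}\l \la_{max}$. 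Hence whenever $\la_{n-1}\l \a/L$, the linesearch terminates immediately at $\t_n = 1$, and because step~1.b returns the \emph{largest} admissible stepsize, we get $\la_n \g \la_{n-1}$.

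To close, observe that if $\la_n \to 0$ then $\la_{n-1} < \a/L$ for all $n$ past some index $n_0$, so the previous paragraph yields $\la_n \g \la_{n-1}$ for every $n > n_0$. Thus $(\la_n)_{n\g n_0}$ is nondecreasing, hence bounded below by $\la_{n_0} > 0$, which contradicts $\la_n \to 0$. Therefore $\limsup_n \la_n > 0$. I expect the only delicate point to be the uniform Lipschitz constant: one must ensure that the pair $F(y_n)$, $F(y_{n-1})$ is controlled by a single $L$ valid for all large $n$, which is precisely why boundedness of $(x_n)$ — and the resulting boundedness of the $y_n$ — is indispensable; everything else is a short monotonicity argument.
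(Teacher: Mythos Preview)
Your proof is correct and follows essentially the same route as the paper's: both extract a uniform Lipschitz constant from boundedness of $(y_n)$, observe that once $\la_{n-1}$ is small enough the linesearch accepts $\t_n=1$ with the witness $\la=\la_{n-1}$, and conclude that $(\la_n)$ is eventually nondecreasing, contradicting $\la_n\to 0$. The only cosmetic difference is that the paper absorbs $\a$ into its Lipschitz constant (writing $\n{F(y_n)-F(y_{n-1})}\l \a L\n{y_n-y_{n-1}}$ so the threshold becomes $\la_{n-1}<1/L$), whereas you keep $\a$ explicit and use $\la_{n-1}\l \a/L$.
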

\begin{proof}
Evidently, the sequence $(y_n)$ is bounded as well. Since $F$ is
Lipschitz-continuous on bounded sets, there exists $L>0$ such that
$$\n{F(y_n) - F(y_{n-1})}\leq \a L \n{y_n - y_{n-1}} \quad \forall
n\in \N.$$
From the construction of $(\la_n)$ it can be seen easily that if we
have $\la_{n-1} < \frac 1 L$ then $\t_n = \sigma^{0} = 1$ and $\la = \la_{n-1}$
satisfy inequality
$$\n{\la F(y_n) - \t_n\la_{n-1} F(y_{n-1})} \leq \a \n{y_{n} - y_{n-1}}.$$
In other words, the linesearch terminates after one iteration. Since we seek
the largest $\la \in (0, \frac{1+\t_{n-1}}{\t_n}\la_{n-1}]$,  we have $\la_n
\geq \la_{n-1}$. 

Now, on the contrary, assume that $\lim_{n \to \infty} \la_n =
0$. Hence, there exists $n_0$ such that $ \la_n < \frac{1}{L}$ for
all $n \geq n_0$.  Let $n > n_0 $. As $\la_{n-1} < \frac{1}{L}$, we get $\la_n\geq
\la_{n-1}$. But $\la_n< \frac{1}{L}$ as well, so again we have that
$\la_{n+1}\geq \la_{n}$. By induction we conclude that $(\la_n)_{n\geq
n_0}$ is nondecreasing and thus can not converge to zero. This
contradicts to our assumption.
\end{proof}

\subsection{Algorithm 2} \label{sec:alg2}
For a general problem \eqref{gvi} we propose the following algorithm.

% \fbox{
% \begin{minipage}[t]{1.0\linewidth}
% \vspace{5pt}
% \quad \text{\bf Algorithm 2 }
% \vspace{0.5cm}
% \begin{description}
% \item[Initialization.] Choose
%  $\a \in(0, \sqrt 2 -1)$, $\la_{max} > 0$, $\sigma \in (0,1)$,
%  $x_1,x_0, y_0 \in \E$, $\la_0 > 0$. Set $\t_0 = 1$.

% \item[Step 1.]  \textbf{Linesearch} \par

% For given $x_n$, $x_{n-1}$, $y_{n-1}$, $\la_{n-1}$,
% $\t_{n-1}$ take $i = 0$ and run \\
% \texttt{repeat}

% \qquad  Take $\t_n =   \sqrt{1+\t_{n-1}} \sigma^i $ and 
% \begin{align*}
%     y_n & = x_n + \t_n (x_n - x_{n-1})\\
%     \la_n & = \min\{\t_n \la_{n-1}, \la_{max}\}
% \end{align*}

% \qquad \texttt{if   } $\la_{n}\n{F(y_{n})-F(y_{n-1})}\leq \a
% \n{y_n - y_{n-1}}$ 

% \qquad \qquad   \texttt{then break}

% \qquad \texttt{else:} $i = i+1$

% \item[Step 2.] Compute
% \begin{equation} \label{xn_main_prox}
% x_{n+1}= \prox_{\la_n g}(x_n - \la_n F(y_n))
% \end{equation}
% Set $n = n+1$ and go to Step 1.
% \end{description}
% \end{minipage}
% } \vspace{5pt}

% To make presentation more general, we introduce auxiliary sequence
% $\mathcal S(a)= (s_i)_{i\geq 0}$ such that
% \begin{itemize}
%     \item $s_0 = a$;

%     \item $0<s_i < s_{i+1}$ for all $i\geq 0$;

%     \item $\exists i$ such that $s_i = 1$.
% \end{itemize}
% By $\mathcal S_i(a)$ we denote $s_i$. As example, one can take
% $\mathcal S(a) = a \sigma^i$, for $a\geq 1$ and $\sigma\in(0,1)$.

\begin{algorithm}[!ht]\caption{\textit{}}
    \label{alg:A2}
\begin{algorithmic}
    \STATE {\bfseries Initialization:} Choose
    $\a \in(0, \sqrt 2 -1)$, $\la_{max} > 0$, $\sigma \in(0,1)$
    $x_1,x_0, y_0 \in \E$, $\la_0 > 0$. Set $\t_0 = 1$.
    \STATE {\bfseries Main iteration:} 
    \STATE 1. For given $x_n$, $x_{n-1}$, $y_{n-1}$, $\la_{n-1}$,
    $\t_{n-1}$ set $i = 0$ and run
    \STATE ~~~ \textbf{Linesearch:}
    \STATE ~~~1.a. Set
    \vspace{-3ex}
    \begin{align*}
    \t_{n} & =
    \begin{cases}
        \sqrt{1+\t_{n-1}} \sigma^i, \text{ if } \la_{n-1}\leq \frac 1 2 \la_{max}, \\
        \sigma^i, \text{otherwise},
        % \mathcal S_i(\sqrt{1+\t_{n-1}}) \text{  if } \la_{n-1}\leq \frac 1 2
        % \la_{max},\\
        %\mathcal S_i(1) \qquad \qquad \text{otherwise},
    \end{cases} \\  
    y_n & = x_n + \t_n (x_n - x_{n-1}),\\
    \la_n & = \t_n \la_{n-1}.
    \end{align*}
    \vspace{-3ex}
    \STATE ~~~1.b. Break linesearch loop if
    \begin{equation}\label{lipsch_main2}
        \la_{n}\n{F(y_{n})-F(y_{n-1})}\leq \a \n{y_n - y_{n-1}}.
    \end{equation}
    \vspace{-3ex}
    \STATE \qquad Otherwise,  set $i:= i+1$ and go to 1.a.
    \STATE ~~~\textbf{End of linesearch}
    \STATE 2. Compute $x_{n+1} = \prox_{\la_n g}(x_n - \la_n F(y_n))$.
    \STATE{\bfseries Output:}  
    Return $x_n$ and $y_n$.
\end{algorithmic}
\end{algorithm}

So, basically, the linesearch procedure finds such
$\t_n\in(0,\sqrt{1+\t_{n-1}}\,]$ (trying to choose the larger one)
that $\la_n = \t_n \la_{n-1}$ satisfies the ``local Lipschitz''
condition~\eqref{lipsch_main2}.  On the one hand, we want to have
$\t_n\geq 1$, since this gives us possibility at least theoretically
to increase the stepsize from iteration to iteration. On the other
hand, we have to ensure that $(\la_n)$ will not be larger than
$\la_{max}$. These caused a bit complicated formula for $\t_n$.

Although \eqref{gvi} with $g(x)=\d_C(x)$ is precisely \eqref{vip},
Algorithm 1 in this case does not coincide with Algorithm
2. The former is more flexible since it does not apply such a
restriction on  stepsizes $\la_n$ as the latter does.

We want to point out that when $F$ is $L$-Lipschitz-continuous,
instead of running linesearch procedure, we can use
a fixed stepsize $\la \in (0, \frac{\a}{L})$ and take $\t_n = 1$ in each
iteration of Algorithm 2. By this we recover a basic algorithm in \cite{malitsky15}.
 
As before, let us show that Algorithm 2 is well-defined.
\begin{lemma}\label{well-def2}
The linesearch in Algorithm 2 always terminates.
\end{lemma}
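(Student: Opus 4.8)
The plan is to mirror the proof of Lemma~\ref{well-def1}, showing that the linesearch terminates by exhibiting a small enough $\t_n$ that satisfies the break condition~\eqref{lipsch_main2}. The key difference from Algorithm~1 is that here $\la_n$ is not chosen freely but is tied to $\t_n$ via $\la_n = \t_n\la_{n-1}$, and $\t_n$ is driven down geometrically by the factor $\sigma^i$. So the argument must exploit that as $i\to\infty$ we have $\t_n\to 0$, which forces both sides of~\eqref{lipsch_main2} to behave favorably.

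First I would argue by contradiction: suppose the linesearch never terminates, so the break condition~\eqref{lipsch_main2} fails for every $i$. As $i$ increases, $\t_n = \sqrt{1+\t_{n-1}}\,\sigma^i$ (or $\sigma^i$) tends to $0$, and correspondingly $y_n = x_n + \t_n(x_n - x_{n-1})$ converges to $x_n$. I would localize to a fixed bounded set: since $x_n$, $x_{n-1}$, $y_{n-1}$ are fixed data of the current iteration, all candidate points $y_n$ lie in a bounded set (for instance $D = \conv\{x_n,\, 2x_n - x_{n-1},\, y_{n-1}\}$, exactly as in Lemma~\ref{well-def1}, since $\t_n\in(0,\sqrt{1+\t_{n-1}}\,]$ stays bounded). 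By local Lipschitz continuity of $F$ (assumption A1), $F$ is Lipschitz on $D$ with some constant $L$, so
\begin{equation*}
    \n{F(y_n) - F(y_{n-1})} \leq L \n{y_n - y_{n-1}}.
\end{equation*}

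Then the left-hand side of~\eqref{lipsch_main2} is bounded by $\la_n L\n{y_n - y_{n-1}} = \t_n \la_{n-1} L \n{y_n - y_{n-1}}$. Comparing with the right-hand side $\a\n{y_n - y_{n-1}}$, the break condition is guaranteed as soon as $\t_n\la_{n-1}L \leq \a$, i.e.~$\t_n \leq \a/(\la_{n-1}L)$. Since $\t_n = \sqrt{1+\t_{n-1}}\,\sigma^i \to 0$ as $i\to\infty$, this threshold is eventually met, contradicting the assumption that the linesearch runs forever. (One should note $y_n - y_{n-1}$ may be zero only in degenerate situations; if $y_n = y_{n-1}$ the left side of~\eqref{lipsch_main2} vanishes and the condition holds trivially, so one may assume $\n{y_n - y_{n-1}} > 0$ when dividing.)

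The main obstacle, and the point that needs the most care, is confirming that the candidate points $y_n$ truly stay in a single bounded set $D$ independent of $i$, so that one fixed Lipschitz constant $L$ works uniformly across all inner iterations; this is what makes the threshold $\a/(\la_{n-1}L)$ meaningful. This is straightforward here because $\t_n$ ranges over $(0,\sqrt{1+\t_{n-1}}\,]$, a bounded interval, so the convex hull argument from Lemma~\ref{well-def1} adapts directly. The rest is the elementary observation that a geometric sequence $\sigma^i$ drops below any positive threshold.
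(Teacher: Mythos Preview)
Your approach mirrors the paper's and is essentially correct, but there is one slip in the choice of $D$. In Algorithm~2 the candidate $\t_n$ ranges over $(0,\sqrt{1+\t_{n-1}}\,]$ (or $(0,1]$ in the second branch), and since $\t_{n-1}>0$ we always have $\sqrt{1+\t_{n-1}}>1$. Hence $y_n = x_n + \t_n(x_n-x_{n-1})$ can lie strictly beyond $2x_n-x_{n-1}$, so it need not belong to your proposed $D=\conv\{x_n,\,2x_n-x_{n-1},\,y_{n-1}\}$. The paper handles this by observing (by induction from $\t_0=1$ and the identity $\varphi^2=1+\varphi$) that $\t_n\le\varphi=\tfrac{\sqrt{5}+1}{2}$ for all $n$, and accordingly takes $D=\conv\{x_n,\,(1+\varphi)x_n-\varphi x_{n-1},\,y_{n-1}\}$. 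Alternatively, since the outer iteration is fixed, you could simply set $M=\sqrt{1+\t_{n-1}}$ and use $D=\conv\{x_n,\,(1+M)x_n-Mx_{n-1},\,y_{n-1}\}$. With either correction your argument goes through verbatim.
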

\begin{proof}
The  proof is very similar to the proof of Lemma \ref{well-def1}.  The
main distinction is that now we have to set $D = \conv\{x_n,
(1+\varphi)x_n - \varphi x_{n-1}, y_{n-1} \}$, where $\varphi =
\frac{\sqrt 5 + 1}{2}$, and notice that $\t_n \leq
\varphi$ for all $n\in \N$.
\end{proof}

\begin{lemma}
    \label{basic_lemma2}
    For $(x_n)$, $(y_n)$ defined in Algorithm 2 and $x\in \E$ the following
    inequality holds
    \begin{multline}
    \label{alg2:ineq-1}
    \n{x_{n+1}-x}^2\leq \n{x_n -x}^2 - \n{y_n- x_n}^2
    -\n{x_{n+1}-y_n}^2 \\ +2 \a \n{y_n-y_{n-1}}\n{x_{n+1}-y_n} -
    2\la_n((1+\t_n)g(x_n) - \t_{n}g(x_{n-1})-g(x)) \\ - 2\la_n
    \lr{F(y_n), y_n - x}.
\end{multline}
\end{lemma}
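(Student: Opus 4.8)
The plan is to follow the template of Lemma~\ref{basic_lemma1}, adapting it from the projection case to the general proximal case. The key difference is that now $g$ is a genuine convex function rather than an indicator, so the proximal characterization in Lemma~\ref{prox_charact} produces $g$-terms that must be tracked carefully. First I would apply Lemma~\ref{prox_charact} to $x_{n+1} = \prox_{\la_n g}(x_n - \la_n F(y_n))$, which gives
\begin{equation*}
\lr{x_{n+1} - x_n + \la_n F(y_n), x - x_{n+1}} \g \la_n(g(x_{n+1}) - g(x)) \quad \forall x \in \E.
\end{equation*}
Likewise, for the previous iterate $x_n = \prox_{\la_{n-1} g}(x_{n-1} - \la_{n-1}F(y_{n-1}))$, I would write the analogous inequality and evaluate it at the two test points $x = x_{n+1}$ and $x = x_{n-1}$, producing two scalar inequalities with right-hand sides $\la_{n-1}(g(x_n)-g(x_{n+1}))$ and $\la_{n-1}(g(x_n)-g(x_{n-1}))$ respectively.

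Next I would mimic the combination step: multiply the $x=x_{n-1}$ inequality by $\t_n$ and add it to the $x=x_{n+1}$ one. Using $\t_n(x_n - x_{n-1}) = y_n - x_n$ and $\la_n = \t_n \la_{n-1}$ (which holds in Algorithm~2 by construction, unlike Algorithm~1), the left-hand sides combine exactly as before into an inner product involving $y_n - x_n$, $\la_n F(y_{n-1})$, and $x_{n+1} - y_n$. The crucial bookkeeping is on the right: the two $g$-terms combine into $\la_n\big((1+\t_n)g(x_n) - g(x_{n+1}) - \t_n g(x_{n-1})\big)$. I would then sum this with the first proximal inequality (the one at $x_{n+1}$, carrying $\la_n(g(x_{n+1})-g(x))$ on its right), so that the $\pm\,\la_n g(x_{n+1})$ contributions cancel, leaving precisely $-2\la_n\big((1+\t_n)g(x_n) - \t_n g(x_{n-1}) - g(x)\big)$ after accounting for the factor of two.

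Having assembled the inner-product inequality, I would apply the cosine rule (polarization identity) to the geometric inner products $\lr{x_{n+1}-x_n, x-x_{n+1}}$ and $\lr{y_n - x_n, x_{n+1}-y_n}$ to convert them into the squared-norm terms $\n{x_n-x}^2 - \n{x_{n+1}-x}^2 - \n{y_n-x_n}^2 - \n{x_{n+1}-y_n}^2$, exactly as in the proof of Lemma~\ref{basic_lemma2}'s predecessor. The remaining cross term $\lr{\la_n F(y_n) - \la_n F(y_{n-1}), y_n - x_{n+1}}$ (using $\t_n\la_{n-1}=\la_n$) is bounded via Cauchy--Schwarz and the linesearch condition \eqref{lipsch_main2}, namely $\la_n\n{F(y_n)-F(y_{n-1})} \le \a\n{y_n-y_{n-1}}$, yielding the term $2\a\n{y_n-y_{n-1}}\n{x_{n+1}-y_n}$.

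The main obstacle I anticipate is purely in the algebraic bookkeeping of the $g$-terms and the stepsize factors rather than in any deep idea: one must use $\la_n = \t_n\la_{n-1}$ at the right moment (this is guaranteed in Algorithm~2 but was \emph{not} in Algorithm~1, which is why Lemma~\ref{basic_lemma1} had no $g$-terms and used \eqref{lipsch_main} directly), and one must ensure the $g(x_{n+1})$ terms cancel cleanly when summing the proximal inequalities. A secondary subtlety is that the linesearch condition for Algorithm~2 bounds $\la_n\n{F(y_n)-F(y_{n-1})}$ directly, whereas Algorithm~1 bounds $\n{\la_n F(y_n)-\la_{n-1}\t_n F(y_{n-1})}$; since here $\la_{n-1}\t_n = \la_n$, the two coincide and the cross term simplifies identically. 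Assuming these manipulations are carried out correctly, the stated inequality \eqref{alg2:ineq-1} follows directly.
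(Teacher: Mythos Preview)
Your proposal is correct and follows essentially the same route as the paper: apply Lemma~\ref{prox_charact} at $x_{n+1}$ and at $x_n$ (the latter evaluated at $x=x_{n+1}$ and $x=x_{n-1}$), combine via the $\t_n$-weighting using $\t_n(x_n-x_{n-1})=y_n-x_n$ and $\la_n=\t_n\la_{n-1}$, then apply the cosine rule and the linesearch bound~\eqref{lipsch_main2}. Your bookkeeping observations about the $g(x_{n+1})$ cancellation and the coincidence $\la_{n-1}\t_n=\la_n$ are exactly the points the paper relies on.
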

The general idea of the following proof is very similar to the
previous one.
\begin{proof}
        By Lemma~\ref{prox_charact}
\begin{equation}
    \label{eq:2.1}
    \lr{x_{n+1} - x_n + \la_n F(y_n), x-x_{n+1}} \g \la_n (g(x_{n+1})-g(x)) \quad \forall x\in \E.
\end{equation}
Similarly,
$$    \lr{x_{n} - x_{n-1} + \la_{n-1} F(y_{n-1}), x-x_{n}} \g
\la_{n-1} (g(x_{n})-g(x)) \quad \forall x\in \E.$$
After substitution in the last inequality $x = x_{n+1}$ and  $x =
x_{n-1}$ we obtain
\begin{align*}
     \lr{x_{n} - x_{n-1} + \la_{n-1} F(y_{n-1}), x_{n+1}-x_{n}} & \g \la_{n-1}
(g(x_{n})-g(x_{n+1})),\\
\lr{x_{n} - x_{n-1} + \la_{n-1} F(y_{n-1}), x_{n-1}-x_{n}} & \g \la_{n-1}
(g(x_{n})-g(x_{n-1})).
\end{align*}
Multiplying the last inequality by $\t_n$ and then adding it to the previous ones yields
\begin{equation}
    \label{2.xn}
    \lr{x_{n} - x_{n-1} + \la_{n-1} F(y_{n-1}), x_{n+1}-y_{n}} \g \la_{n-1}
((1+\t_n)g(x_n) - g(x_{n+1}) - \t_n g(x_{n-1}) ).
\end{equation}
From $\t_n(x_n-x_{n-1}) = y_n - x_n$ and $\la_n = \t_n \la_{n-1}$ we
get
\begin{equation}
    \label{2.xn2}
    \lr{y_n-x_n + \la_{n} F(y_{n-1}), x_{n+1}-y_{n}} \g \la_{n}
((1+\t_n)g(x_n) - g(x_{n+1}) - \t_n g(x_{n-1}) ).
\end{equation}

Adding \eqref{eq:2.1} to \eqref{2.xn2} gives us
\begin{multline}
    \label{add_all}
 \lr{x_{n+1} - x_n , x-x_{n+1}} + \lr{y_{n} - x_{n}, x_{n+1}-y_{n}} 
 + \la_n \lr{F(y_n) - F(y_{n-1}), y_n -
 x_{n+1}} \\ \g \la_n ((1+\t_n)g(x_n) - \t_n g(x_{n-1}) - g(x)) + \la_n
 \lr{F(y_n), y_n-x}.   
\end{multline}
Using the cosine rule and \eqref{lipsch_main2}, we obtain
\begin{multline}
    \label{eq:11}
    \n{x_{n+1}-x}^2\leq \n{x_n -x}^2 - \n{y_n- x_n}^2
    -\n{x_{n+1}-y_n}^2 \\ +2 \a \n{y_n-y_{n-1}}\n{x_{n+1}-y_n} -
    2\la_n((1+\t_n)g(x_n) - \t_{n}g(x_{n-1})-g(x)) \\ - 2\la_n
    \lr{F(y_n), y_n - x},
\end{multline}
that finishes the proof.
\end{proof}

For Algorithm~2 we can prove a stronger result than Lemma~\ref{separ}.
\begin{lemma}
    \label{separ2} 
Assume that the sequence $(x_n)$, generated by Algorithm 2, is bounded. Then  $\liminf_{n\to \infty} \la_{n}
> 0$.
\end{lemma}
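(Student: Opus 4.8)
The plan is to exploit the fact that in Algorithm~2 the stepsize is updated multiplicatively, $\la_n = \t_n \la_{n-1}$, with a geometric backtracking factor $\sigma$, so that a failed linesearch test at an inner iteration forces $\la_n$ to remain above a fixed positive threshold. First I would record that, since $(x_n)$ is bounded and $\t_n \l \varphi := \frac{1+\sqrt 5}{2}$ for all $n$ (as already used in Lemma~\ref{well-def2}), the sequence $(y_n)$ together with every intermediate point produced while the linesearches run stays inside a single bounded set. Hence, by local Lipschitz continuity of $F$, there exists one constant $L>0$ with $\n{F(y_n)-F(y_{n-1})} \l L \n{y_n - y_{n-1}}$ for every $n$, and the same bound applies to the intermediate iterates tested inside the linesearch.

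The core of the argument will be a dichotomy for each step $n$. If the linesearch terminates already at $i=0$, then $\t_n \in \{\sqrt{1+\t_{n-1}},\, 1\}$, so $\t_n \g 1$ and therefore $\la_n = \t_n \la_{n-1} \g \la_{n-1}$. If instead the linesearch performs at least one backtracking step, then test~\eqref{lipsch_main2} failed at the previous inner value of the stepsize, say $\la_n'=\la_n/\sigma$ with associated point $y_n'$. Failure means $\la_n' \n{F(y_n')-F(y_{n-1})} > \a \n{y_n'-y_{n-1}}$; in particular $y_n'\neq y_{n-1}$, and combining with $\n{F(y_n')-F(y_{n-1})} \l L \n{y_n'-y_{n-1}}$ and dividing by $\n{y_n'-y_{n-1}}>0$ gives $\la_n' > \a/L$, whence $\la_n = \sigma \la_n' > \sigma \a / L$.

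Putting the two cases together yields $\la_n \g \min\{\la_{n-1},\, \sigma\a/L\}$ for every $n$. A one-line induction then gives $\la_n \g \min\{\la_0,\, \sigma\a/L\}>0$ for all $n$, and therefore $\liminf_{n\to\infty}\la_n \g \min\{\la_0,\, \sigma\a/L\}>0$, which is exactly the claim.

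The only delicate points I expect are bookkeeping: ensuring that a single Lipschitz constant $L$ covers all intermediate linesearch points (this is precisely why boundedness of $(x_n)$ and the uniform bound $\t_n\l\varphi$ are invoked), and ruling out the degenerate case $y_n'=y_{n-1}$ before dividing — but that case cannot occur when the test fails, since then both sides of~\eqref{lipsch_main2} vanish. I would also emphasize why this is genuinely stronger than Lemma~\ref{separ}: here the multiplicative update $\la_n=\t_n\la_{n-1}$ with the discrete factor $\sigma$ turns $\sigma\a/L$ into a uniform per-step lower bound, whereas the two-operator test~\eqref{lipsch_main} of Algorithm~1 does not directly bound $\la_n$ from below at every step, so there one can only conclude a $\limsup$ statement.
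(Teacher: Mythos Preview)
Your proposal is correct and follows essentially the same route as the paper: both argue by a dichotomy on whether the linesearch backtracks, using $\t_n\ge 1$ when $i=0$ to get $\la_n\ge\la_{n-1}$, and using failure of \eqref{lipsch_main2} at the previous inner step when $i>0$ to extract a uniform lower bound $\la_n>\sigma\a/L$ (the paper normalizes $L$ differently and writes $\sigma/L$). Your version is arguably cleaner in two small respects: you explicitly justify that the Lipschitz bound applies to the intermediate linesearch point $y_n'$ (the paper leaves this implicit), and you avoid the paper's ``without loss of generality $\la_0>\sigma/L$'' by carrying the $\min\{\la_0,\sigma\a/L\}$ through the induction.
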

\begin{proof}
Since $(x_n)$ is bounded, there exists $L>0$ such that
$$\n{F(y_n) - F(y_{n-1})}\leq \a L \n{y_n - y_{n-1}}.$$
Without loss of generality assume that $\la_0 > \frac{\sigma}{L}$. We
show that from $\la_{n-1} > \frac{\sigma}{L}$ follows
$\la_n>\frac{\sigma}{L}$. Clearly, if $\la_n  \leq
\frac 1 L$ then \eqref{lipsch_main2} holds.
Suppose that $\la_n = \sqrt{1+\t_{n-1}}\sigma^i \la_{n-1}$ for some
$i\in \Z^+$. If $i = 0$ then it is obvious that
$\la_n>\la_{n-1}>\frac{\sigma}{L}$. If $i>0$ then by the construction
of the linesearch  $\la_n' =
\sqrt{1+\t_{n-1}}\sigma^{i-1} \la_{n-1}$ does not satisfied  \eqref{lipsch_main2}. This means that $\la_n'> \frac
1 L$ and hence, $\la_n > \frac{\sigma}{L}$. 
\end{proof}

\subsection{Proof of convergence} \label{sec:proof}
For generality we will write
$$\Psi(x, y):=\lr{F(x), y-x}+g(y)-g(x),$$
where in case of Algorithm 1 we suppose that $g(x)=\d_C(x)$.
It is clear that both problems~\eqref{vip} and~\eqref{gvi} are
equivalent to finding $\x\in  
\E$ such that $\Psi(\x, x)\geq 0$ for all $x\in \E$.

\begin{lemma}\label{lemma:main_ineq}
    Let $(x_n)$, $(y_n)$ be generated by either Algorithm 1
    or 2 and let $\x \in X^*$. Then the following inequality holds
    \begin{align}\label{main_ineq}
     \n{x_{n+1}-\x}^2 \leq {} & \n{x_n-\x}^2 - (1-\a (1+\sqrt 2))\n{y_n-x_n}^2 \nonumber \\ & -
    (1-\sqrt 2\a) \n{x_{n+1}-y_n}^2 + \a \n{x_n-y_{n-1}}^2\nonumber \\ & - 2\la_n
    (1+\t_n)\Psi(\x, x_n) +  2\la_{n-1}(1 + \t_{n-1}) \Psi(\x, x_{n-1}).
\end{align}
\end{lemma}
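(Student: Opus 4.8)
The plan is to handle both algorithms at once through the function $\Psi$, starting from the basic estimates already established. For Algorithm~2, inequality \eqref{alg2:ineq-1} carries the $g$-terms explicitly; for Algorithm~1 we have $g=\d_C$, so $g(x_n)=g(x_{n-1})=g(\x)=0$ and \eqref{alg1:ineq-1} is exactly the corresponding special case. Hence it suffices to massage \eqref{alg2:ineq-1} (with $x=\x$), and there are two essentially independent pieces of work: reshaping the quadratic ``cross term'' $2\a\n{y_n-y_{n-1}}\n{x_{n+1}-y_n}$, and rewriting the combination of the $F$- and $g$-terms as the telescoping $\Psi$-expression. The cosine rule and the local Lipschitz bounds \eqref{lipsch_main}/\eqref{lipsch_main2} are already baked into the basic lemmas, so I would not revisit them.

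For the cross term I would first apply the triangle inequality $\n{y_n-y_{n-1}}\l\n{y_n-x_n}+\n{x_n-y_{n-1}}$ and then bound each product by a \emph{weighted} Young inequality $2ab\l t\,a^2+t^{-1}b^2$, the point being to pick the weights so that the coefficients match \eqref{main_ineq} exactly. Taking $t=1+\sqrt2$ on the product containing $\n{y_n-x_n}$ and $t=1$ on the one containing $\n{x_n-y_{n-1}}$ gives
$$2\a\n{y_n-y_{n-1}}\n{x_{n+1}-y_n}\l \a(1+\sqrt2)\n{y_n-x_n}^2+\sqrt2\,\a\n{x_{n+1}-y_n}^2+\a\n{x_n-y_{n-1}}^2,$$
where the coefficient $\sqrt2\,\a$ on $\n{x_{n+1}-y_n}^2$ arises precisely because $\tfrac{1}{1+\sqrt2}+1=\sqrt2$. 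Substituting this into \eqref{alg2:ineq-1} converts $-\n{y_n-x_n}^2-\n{x_{n+1}-y_n}^2$ into the first three quadratic terms of \eqref{main_ineq}.

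It then remains to treat $-2\la_n\lr{F(y_n),y_n-\x}$ jointly with $-2\la_n((1+\t_n)g(x_n)-\t_n g(x_{n-1})-g(\x))$. I would invoke monotonicity of $F$ (A1) to replace $F(y_n)$ by $F(\x)$, i.e. $-2\la_n\lr{F(y_n),y_n-\x}\l-2\la_n\lr{F(\x),y_n-\x}$, and then expand $y_n-\x=(1+\t_n)(x_n-\x)-\t_n(x_{n-1}-\x)$. Collecting the resulting linear $F(\x)$-terms with the $g$-terms, a direct check shows the whole block equals $-2\la_n(1+\t_n)\Psi(\x,x_n)+2\la_n\t_n\Psi(\x,x_{n-1})$; in particular the $g(\x)$ contributions combine correctly, since $2\la_n(1+\t_n)-2\la_n\t_n=2\la_n$.

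The last step, and the one genuinely requiring care, is turning $+2\la_n\t_n\Psi(\x,x_{n-1})$ into the stated $+2\la_{n-1}(1+\t_{n-1})\Psi(\x,x_{n-1})$. Here I would use that $\x\in X^*$ forces $\Psi(\x,x_{n-1})\g0$, so it is enough to verify the stepsize inequality $\la_n\t_n\l\la_{n-1}(1+\t_{n-1})$. This holds for both methods: for Algorithm~1 it is exactly the constraint $\la_n\l\frac{1+\t_{n-1}}{\t_n}\la_{n-1}$ enforced in step~1.b, while for Algorithm~2 it follows from $\la_n=\t_n\la_{n-1}$ together with $\t_n\l\sqrt{1+\t_{n-1}}$ (whence $\t_n^2\l1+\t_{n-1}$). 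I expect this coupling of the sign of $\Psi$ with the stepsize rule to be the main obstacle, since it is what forces the somewhat delicate choice of the factor $\sqrt{1+\t_{n-1}}$ in the linesearch; everything else is the triangle inequality and the two Young estimates above.
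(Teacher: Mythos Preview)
Your proposal is correct and follows essentially the same route as the paper: monotonicity of $F$ to pass to $F(\x)$, the affine expansion $y_n-\x=(1+\t_n)(x_n-\x)-\t_n(x_{n-1}-\x)$ to produce the $\Psi$-terms, the stepsize bound $\la_n\t_n\l(1+\t_{n-1})\la_{n-1}$ together with $\Psi(\x,x_{n-1})\g0$ to telescope, and finally the cross-term estimate. The only cosmetic difference is in deriving that estimate: the paper applies a single weighted Young inequality $2ab\l\frac{1}{\sqrt2}a^2+\sqrt2\,b^2$ first and then the triangle inequality inside the resulting square (using $(u+v)^2\l(2+\sqrt2)u^2+\sqrt2\,v^2$), whereas you split via the triangle inequality first and then apply two separate Young inequalities; both yield the identical bound \eqref{estim}.
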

\begin{proof}
    Monotonicity of $F$ yields
    \begin{align}
    \label{mono}
      \la_n \lr{F(y_n), y_n - \x} & \geq \la_n \lr{F(\x), y_n - \x}
                                   \nonumber \\
      & = \la_n ((1+\t_n)\lr{F(\x), x_n-\x} - \t_n \lr{F(\x), x_{n-1}-\x}).
    \end{align}
Taking $x=\x$ and using the above, we can rewrite  both
\eqref{alg1:ineq-1} and \eqref{alg2:ineq-1} as one inequality
\begin{multline} 
    \n{x_{n+1}-\x}^2\leq \n{x_n -\x}^2 - \n{y_n-x_n}^2
    - \n{x_{n+1}-y_n}^2 \\ +2 \a \n{y_n-y_{n-1}}\n{x_{n+1}-y_n} -
    2\la_n((1+\t_n)\Psi(\x, x_n) - \t_n \Psi(\x, x_{n-1})).
\end{multline}
Note that in both cases we have that $\la_n \t_n \leq
(1+\t_n)\la_{n-1}$. Since $\Psi(\x, x_{n-1})\geq 0$, it follows
\begin{multline}\label{ineq-22}
    \n{x_{n+1}-\x}^2\leq \n{x_n -\x}^2 - \n{y_n-x_n}^2
    - \n{x_{n+1}-y_n}^2 \\ +2 \a \n{y_n-y_{n-1}}\n{x_{n+1}-y_n} -
    2\la_n(1+\t_n)\Psi(\x, x_n) + 2\la_{n-1}(1+\t_{n-1}) \Psi(\x,
    x_{n-1}). 
\end{multline}
It only remains to estimate $2\a \n{y_{n}-y_{n-1}}\n{x_{n+1}-y_n}$. For
this we use the estimation from \cite{malitsky15}.
\begin{multline}\label{estim}
2\a \n{y_n-y_{n-1}}\n{x_{n+1}-y_n}   \leq
\a \bigl(\frac{1}{\sqrt 2}\n{y_n-y_{n-1}}^2 + \sqrt{2} \n{x_{n+1}-y_n}^2\bigr) \\  \leq
\frac{\a}{\sqrt 2}\bigl(\n{y_n - x_n} + \n{x_n - y_{n-1}}\bigr)^2 + \sqrt 2 \a \n{x_{n+1}-y_n}^2\\
\leq  \frac{\a}{\sqrt 2}\bigl((2+\sqrt 2)\n{y_n-x_n}^2+\sqrt 2\n{x_n-y_{n-1}}^2\bigr) +  \sqrt{2} \a \n{x_{n+1}-y_n}^2 \\ 
=\a (1+\sqrt 2)\n{y_n - x_n}^2+ \a \n{x_n-y_{n-1}}^2 +  \sqrt{2} \a \n{x_{n+1}-y_n}^2.
\end{multline}
Combining \eqref{ineq-22} and \eqref{estim}, we get the desirable inequality \eqref{main_ineq}.
\end{proof}

\begin{theorem}\label{main_th}
    Let sequences $(x_n)$ and $(y_n)$ be generated by either Algorithm
    1 or 2. Then $(x_n)$ and $(y_n)$ converge to a solution of
    \eqref{gvi}.
\end{theorem}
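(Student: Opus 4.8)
The plan is to turn the master inequality \eqref{main_ineq} into a monotone (quasi-Fej\'er) Lyapunov functional and then combine a summability argument with a limit-point analysis. First I would fix $\x\in X^*$ and set
$$\phi_n := \n{x_n-\x}^2 + \a\n{x_n-y_{n-1}}^2 + 2\la_{n-1}(1+\t_{n-1})\Psi(\x,x_{n-1}),$$
which is nonnegative because $\Psi(\x,x_{n-1})\g 0$. Adding $\a\n{x_{n+1}-y_n}^2 + 2\la_n(1+\t_n)\Psi(\x,x_n)$ to both sides of \eqref{main_ineq}, the two $\Psi$-terms telescope and, using $-(1-\sqrt2\a)+\a = -(1-\a(1+\sqrt2))$, the two $\n{x_{n+1}-y_n}^2$-terms combine, yielding
$$\phi_{n+1}\l \phi_n - (1-\a(1+\sqrt2))\bigl(\n{y_n-x_n}^2+\n{x_{n+1}-y_n}^2\bigr).$$
Since $\a\in(0,\sqrt2-1)$ gives $\a(1+\sqrt2)<1$, the coefficient $c:=1-\a(1+\sqrt2)$ is positive, so Lemma \ref{lim_seq} shows that $(\phi_n)$ is bounded and (being nonincreasing) convergent, and that $\n{y_n-x_n}\to0$ and $\n{x_{n+1}-y_n}\to0$. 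In particular $(x_n)$ and $(y_n)$ are bounded and $\n{x_{n+1}-x_n}\to0$.

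Next I would extract a good subsequence and identify its limit as a solution. As $(x_n)$ is bounded, Lemma \ref{separ} (Algorithm~1) or Lemma \ref{separ2} (Algorithm~2) gives $\delta>0$ and indices $n_k$ with $\la_{n_k}\g\delta$; passing to a further subsequence I may assume $x_{n_k}\to\hat x$, whence $y_{n_k}\to\hat x$ and $x_{n_k+1}\to\hat x$ by the first step. Writing Lemma \ref{prox_charact} for $x_{n_k+1}=\prox_{\la_{n_k}g}(x_{n_k}-\la_{n_k}F(y_{n_k}))$, dividing by $\la_{n_k}\g\delta$, and letting $k\to\infty$, the term $\frac{1}{\la_{n_k}}\lr{x_{n_k+1}-x_{n_k},x-x_{n_k+1}}$ vanishes, while continuity of $F$ and lower semicontinuity of $g$ give, for every $x\in\E$, $\lr{F(\hat x),x-\hat x}+g(x)-g(\hat x)\g0$. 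Thus $\hat x\in X^*$, and finiteness of the left-hand side for $x\in\dom g$ forces $\hat x\in\dom g$ (for Algorithm~1 this reads $\hat x\in C$ and $\lr{F(\hat x),x-\hat x}\g0$ on $C$).

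Finally I would upgrade subsequential convergence to full convergence by taking $\x=\hat x$ in $\phi_n$. The term $\a\n{x_n-y_{n-1}}^2\to0$, and along $n_k$ we have $x_{n_k}\to\hat x$ and also $x_{n_k-1}\to\hat x$ (since $\n{x_{n_k}-x_{n_k-1}}\to0$), all points lying in $\dom g$; hence assumption A3 and continuity of $F$ give $\Psi(\hat x,x_{n_k})\to0$ and $\Psi(\hat x,x_{n_k-1})\to0$, so $\phi_{n_k}\to0$. Because $(\phi_n)$ converges, its limit must be $0$, and since $\phi_n\g\n{x_n-\hat x}^2\g0$ this forces $x_n\to\hat x$ and then $y_n\to\hat x$, completing the proof for both algorithms.

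The main obstacle is selecting the Lyapunov functional: the offending $+\a\n{x_n-y_{n-1}}^2$ in \eqref{main_ineq} must be absorbed by carrying a copy of $\a\n{x_{n+1}-y_n}^2$ inside $\phi_n$, which works exactly because $\a<\sqrt2-1$ keeps the merged coefficient $c$ positive. A second delicate point is passing limits through $g$: lower semicontinuity alone certifies $\hat x\in X^*$, but driving $\Psi(\hat x,\cdot)\to0$ along the subsequence, which is what pins $\phi_\infty$ to $0$, genuinely requires the continuity assumption A3 on $\dom g$.
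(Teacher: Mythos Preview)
Your proof is correct and essentially identical to the paper's. Your Lyapunov functional $\phi_n$ coincides with the paper's $a_n$, the telescoping argument yielding $\phi_{n+1}\le\phi_n - c\bigl(\n{y_n-x_n}^2+\n{x_{n+1}-y_n}^2\bigr)$ is the same, and the limit-point identification via Lemma~\ref{prox_charact} together with the final upgrade to full convergence using A3 matches the paper step by step; the only cosmetic difference is that the paper evaluates the Lyapunov sequence along $n_k+1$ whereas you use $n_k$.
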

\begin{proof}
     Let us show that the sequence $(x_n)$ is bounded. Fix any $\x \in
     X^*$.      For $n\geq 1$  set
\begin{align*}
a_{n+1} & = \n{x_{n+1}-\x}^2  + \a \n{x_{n+1}-y_n}^2 + 2\la_n(1 +
\t_n)\Psi(\x, x_n)\\
b_n & = (1-\a (1+\sqrt 2))\bigl(\n{x_n-y_n}^2 + \n{x_{n+1}-y_n}^2\bigr).
\end{align*}
It is easy to see that \eqref{main_ineq} is equivalent (in a new
notation) to
\begin{equation}\label{ab}
    a_{n+1}\l a_n - b_n.
\end{equation}
Evidently, $a_n\g 0$ and $b_n\g 0$. Hence, by Lemma~\ref{lim_seq} we conclude that $(a_n)$ is bounded
and $\lim_{n\to \infty}b_n = 0$. This means that $(\n{x_n-x}^2)$ is bounded as well as $(x_n)$
and $$\lim_{n\to \infty}\n{x_n - y_n} = 0, \quad \lim_{n\to \infty}\n{x_{n+1}- y_n} = 0. $$
From the above it also follows that $\lim_{n\to \infty}\n{x_{n+1} - x_n} = 0$ and $(y_n)$ is
bounded.

By Lemma~\ref{separ} or \ref{separ2} and by boundedness of $(x_n)$ there exists an increasing
sequence $(n_k)$ of positive numbers such that $(\la_{n_k})$ is separated from
zero and $(x_{n_k})$ converges to some $x^* \in \E$ as $k\to \infty$.  It
is clear that $(y_{n_k})$ also converges to that $x^*$. We show $x^* \in
X^*$.

From Lemma~\ref{prox_charact} it follows that
 $$ \lr{x_{n_k+1} - x_{n_k} + \la_{n_k} F(y_{n_k}), x-x_{n_k+1}} \g
\la_{n_k}( g(x_{n_k+1}) - g(x))
\quad \forall x\in \E$$
 or equivalently
 \begin{equation}\label{eq:12}
 \lr{\frac{x_{n_k+1} - x_{n_k}}{\la_{n_k}}, x-x_{n_k+1}} +
 \lr{F(y_{n_k}), x-x_{n_k+1}} \g g(x_{n_k+1}) - g(x) \quad \forall x\in \E.
\end{equation}
 Taking the lower limit in \eqref{eq:12} as $k \to \infty $ and using that $(\la_{n_k})$ is separated from
 zero, 
  $\n{x_{n_k+1}-x_{n_k}}\to 0$, and $g(x)$ is l.s.c., we obtain
 \begin{equation}\label{eq:13}
 \lr{F(x^*), x-x^*} \geq \liminf_{k\to \infty}g(x_{n_{k+1}}) - g(x)
 \geq g(x^*)-g(x) \quad \forall x\in \E.
\end{equation}
Hence, $x^* \in X^*$.

From \eqref{ab} we have that for any $\x\in X^*$ the sequence $(a_n)$ is monotone, hence, it is
convergent. Thus, taking $\x = x^*$ defined above, we get that the sequence
$$ a_{n+1}^* = \n{x_{n+1}-x^*}^2  +  \a \n{x_{n+1}-y_{n}}^2 +  2\la_{n}(1 +
\t_{n})\Psi(x^*, x_{n})$$
is convergent. As $(\la_n)$ is bounded and $\Psi(x^*,\cdot)$ is
continuous due to A3, $\lim_{n\to \infty} a_n^* = \lim_{k\to \infty
}a_{n_k +1}^* =
0$. Therefore, $\lim_{n\to \infty}\n{x_n - x^*} = 0$ and the proof is complete.
\end{proof}

As one can see, the last arguments were the only place where we used
A3. Without this assumption we are only able to show that all limits
points of $(x_n)$ belong to $X^*$.
 
\begin{remark}\label{rem:init}
Both Algorithm 1 and 2 require $\la_0 >0$ as input data. Although the
algorithms do not have any restriction on the initialization
procedure, we suggest to define $\la_0$ as follows.  Choose any
$x_1$ in a small neighborhood of the starting point $x_0$ and take the
largest $\la_0$ that satisfies
$$\la_0 \n{F(x_1)-F(x_0)} \leq \a \n{x_1 - x_0}.$$
\end{remark}

\subsection{Affine cases}
In this section we introduce some additional suggestions that can
simplify the proposed algorithms.
\begin{remark}\label{rem:F_affine}
 If $F$ is affine then instead of computing $F(y_n)$
in each iteration of linesearch procedures 1 or 2, we only need to
remember $F(x_n)$, $F(x_{n-1})$ and use that $F(y_n) =
(1+\t_n)F(x_n)-\t_n F(x_{n-1})$.
\end{remark}
Clearly, with this remark computational
complexity of Algorithm 1 or Algorithm 2  per iteration is almost the same as, for
example, projected gradient method (or proximal gradient method) with a fixed stepsize. Our algorithms
require a bit more simple vector-vector operations and a bit more memory.

\begin{remark}\label{rem:C_affine}
When  $C$ in \eqref{vip} is
an affine set, Algorithm 1 becomes simpler. Namely, we do not need the bounds
$\la_n \leq \frac{1 + \t_{n-1}}{\t_n} \la_{n-1}$ neither $\la_n \leq
\la_{max}$.
\end{remark}
In fact, the former bound
was required in our proof of Theorem~\ref{main_th}  to ensure that
$$\la_{n}\t_n \Psi(\x, x_{n}) \leq (1+\t_{n-1})\la_{n-1}
\Psi(\x, x_{n-1})$$ and the latter was used to show that
$\la_{n_k}\Psi(x^*, x_{n_k}) \to 0$.  However, when  $C$ is
affine, $F(\x) = 0$ and thus, $\Psi(\x, x) = 0$ for all $x\in C$. Therefore, both items above hold for any
choice of $\la_n$.

If we consider \eqref{vip} with affine map $F$ and affine set $C$ then
it is clear that Algorithm~1 will benefit all the advantages of the two remarks above.

%%%%%%%%%%%%%%%%%%%% Rate of convergence %%%%%%%%%%%%%%%%%%%%

\subsection{Rate of convergence} \label{sec:rate}
In this section we investigate the ergodic rate of convergence for the
sequence $(y_n)$ for Algorithm~1 and Algorithm~2. It is well-known that
such rate holds for the extragradient method
\cite{nemirovski2004prox,tseng08}. In these papers the authors propose
much more general methods among which the extragradient method was
only a particular example. However, those methods are more
complicated, they used fixed steps and they require Lipschitz
continuity of $F$.

We need the following error function (known as the dual gap function
\cite{pang:03,tseng08}):
\begin{equation}
    \label{eq:gap}
    e(y) = \max_{x\in \dom g} \Psi(x,y):=\lr{F(x),y-x}+g(y)-g(x), \quad y \in \E.
\end{equation}
The relation between this error function and problem \eqref{gvi} is given by the
following lemma. 
\begin{lemma}[see \cite{pang:03,tseng08}]
    \label{lemma:gap}
    $x^* \in X^*$ if and only if $x^* \in \dom g$ and $e(x^*)=0$.
\end{lemma}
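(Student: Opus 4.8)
The plan is to recognize this as Minty's lemma: the condition $e(x^*)=0$ is precisely the Minty (weak) formulation of the variational inequality, whereas $x^*\in X^*$ is the Stampacchia (strong) formulation, and the two coincide because $F$ is monotone and continuous. I would first record two elementary observations. Since $g(x^*)$ must be finite for the defining inequality of \eqref{gvi} to hold against any comparison point in $\dom g$, membership $x^*\in X^*$ already forces $x^*\in\dom g$; and whenever $x^*\in\dom g$, taking $x=x^*$ in the maximization gives $e(x^*)\geq \Psi(x^*,x^*)=0$. Thus the real content of the lemma is the equivalence, for $x^*\in\dom g$, between $\Psi(x^*,x)\geq 0$ for all $x$ and $\Psi(x,x^*)\leq 0$ for all $x\in\dom g$.

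For the forward direction ($x^*\in X^*\Rightarrow e(x^*)=0$) I would invoke monotonicity directly: from $\lr{F(x)-F(x^*),x-x^*}\geq 0$ we get $\lr{F(x),x^*-x}\leq\lr{F(x^*),x^*-x}$, so for every $x$,
$$\Psi(x,x^*) = \lr{F(x),x^*-x}+g(x^*)-g(x) \leq \lr{F(x^*),x^*-x}+g(x^*)-g(x)\leq 0,$$
where the last inequality is exactly the statement $x^*\in X^*$. Combined with $e(x^*)\geq 0$ this yields $e(x^*)=0$.

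The reverse direction ($x^*\in\dom g$ with $e(x^*)=0\Rightarrow x^*\in X^*$) is where the main work lies. Here $e(x^*)=0$ supplies the Minty inequality $\lr{F(x),x-x^*}+g(x)-g(x^*)\geq 0$ for all $x\in\dom g$, but the operator is evaluated at the wrong point, so I cannot conclude immediately. The standard device is a line-segment argument: fix $x\in\dom g$, set $x_t=(1-t)x^*+tx$, which lies in $\dom g$ for $t\in(0,1]$ by convexity of $\dom g$, apply the Minty inequality at $x_t$, use $x_t-x^*=t(x-x^*)$ together with the convexity estimate $g(x_t)-g(x^*)\leq t(g(x)-g(x^*))$, and divide by $t>0$ to obtain $\lr{F(x_t),x-x^*}+g(x)-g(x^*)\geq 0$. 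Finally I would let $t\to 0^+$: since $F$ is locally Lipschitz by A1, hence continuous, $F(x_t)\to F(x^*)$, giving $\lr{F(x^*),x-x^*}+g(x)-g(x^*)\geq 0$ for all $x\in\dom g$, which extends trivially to $x\notin\dom g$ because there $g(x)=+\infty$.

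The main obstacle to watch is this limiting step in the second direction: it is exactly where continuity of $F$ and convexity of $g$ (through both $\dom g$ and the convexity inequality for $g(x_t)$) are indispensable, and one must be careful that dividing by $t$ and passing to the limit are legitimate. The forward direction, by contrast, is a one-line consequence of monotonicity.
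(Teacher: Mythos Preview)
Your argument is correct and is the standard Minty--Stampacchia equivalence proof. Note, however, that the paper does not supply its own proof of this lemma: it is stated with the attribution ``see \cite{pang:03,tseng08}'' and no argument is given, so there is nothing in the paper to compare against beyond observing that your proof is the classical one found in those references.
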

Next theorem shows that we can use the above criteria  to find
$x^*$ with a desired accuracy.
\begin{theorem}\label{th:rate}
    Let $(x_n)$ and $(y_n)$ be the sequences generated by either Algorithm~1 or
    2. Define $\overline \la_N$ and $\overline{x}_N$ as
    $$\overline{\la}_N = \sum_{n=1}^N \la_n + \t_1\la_1, \quad \overline{x}_N =
    \frac{1}{\overline\la_N}\bigl(\sum_{n=2}^N \la_n y_n + (1+\t_1) \la_1 x_1\bigr).$$
    Then $\overline{x}_N\in \dom g$ and 
    \begin{equation}\label{rate_ergodic}
\Psi(x, \overline{x}_N) \leq
\frac{\n{x_1 - x}^2 + \a \n{x_1 - y_0}^2 + 2\la_1 \t_1
\Psi(x, x_0)}{\overline{\la}_N}.
\end{equation}
\end{theorem}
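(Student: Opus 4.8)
The plan is to build directly on the key recursion already established in Lemma~\ref{lemma:main_ineq}, namely inequality~\eqref{main_ineq}. The central observation is that this inequality has a telescoping structure once we define the auxiliary quantity
\[
a_{n+1} = \n{x_{n+1}-x}^2 + \a \n{x_{n+1}-y_n}^2 + 2\la_n(1+\t_n)\Psi(x, x_n),
\]
but this time I must keep the sign of $\Psi(x, x_n)$ general, since $x\in \dom g$ is arbitrary rather than a solution $\x\in X^*$. Rewriting~\eqref{main_ineq} and discarding the manifestly nonpositive square terms (using $\a < \sqrt 2 -1$, so that $1-\a(1+\sqrt2)>0$ and $1-\sqrt2\,\a>0$), I would keep track of the $+\a\n{x_n-y_{n-1}}^2$ term, which telescopes against the $-\n{x_{n+1}-y_n}^2$ terms across consecutive indices. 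The aim is to obtain, after summation, a clean inequality of the form
\[
2\sum_{n=1}^N \la_n(1+\t_n)\Psi(x, x_n) \leq \n{x_1-x}^2 + \a\n{x_1-y_0}^2 + 2\la_1\t_1\Psi(x, x_0).
\]

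The main technical step is the summation itself. Starting from~\eqref{main_ineq} with the square terms dropped, I would write it as
\[
2\la_n(1+\t_n)\Psi(x,x_n) - 2\la_{n-1}(1+\t_{n-1})\Psi(x,x_{n-1}) \leq \n{x_n-x}^2 - \n{x_{n+1}-x}^2 + \a\n{x_n-y_{n-1}}^2 - \a\n{x_{n+1}-y_n}^2,
\]
being careful that the $\n{x_{n+1}-y_n}^2$ and $\n{x_n-y_{n-1}}^2$ terms telescope only if I retain enough of the dropped square coefficient. Summing from $n=1$ to $N$ collapses the right-hand side to $\n{x_1-x}^2 - \n{x_{N+1}-x}^2 + \a\n{x_1-y_0}^2 - \a\n{x_{N+1}-y_N}^2$ plus the telescoped $\Psi$ terms, and after dropping the nonpositive boundary terms $-\n{x_{N+1}-x}^2$ and $-\a\n{x_{N+1}-y_N}^2$ I arrive at the bound displayed above, where the lower endpoint contributes precisely the $2\la_1\t_1\Psi(x,x_0)$ correction.

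The final step is to convert the sum over $\Psi(x,\cdot)$ into a statement about the averaged iterate $\overline{x}_N$ via convexity. Since $\Psi(x,\cdot)$ is convex in its second argument (because $y\mapsto \lr{F(x),y-x}+g(y)-g(x)$ is convex as the sum of an affine function and the convex $g$), Jensen's inequality applied to the convex combination defining $\overline{x}_N$ — whose weights $\la_n/\overline\la_N$ for $n=2,\dots,N$ together with $(1+\t_1)\la_1/\overline\la_N$ sum to one by the definition of $\overline\la_N$ — gives
\[
\Psi(x, \overline{x}_N) \leq \frac{1}{\overline\la_N}\Bigl(\sum_{n=2}^N \la_n \Psi(x, y_n) + (1+\t_1)\la_1 \Psi(x, x_1)\Bigr).
\]
I expect the bookkeeping that matches the index shifts between $\Psi(x,x_n)$ in the recursion and $\Psi(x,y_n)$ in the averaged point to be the main obstacle: one must reconcile the fact that~\eqref{main_ineq} is phrased in terms of $x_n$ while $\overline{x}_N$ averages the $y_n$. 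The reconciliation presumably comes from the monotonicity inequality~\eqref{mono}, which lower-bounds $\la_n\lr{F(y_n),y_n-x}$ by the $x_n,x_{n-1}$ combination, so the $\Psi(x,y_n)$ terms are the ones genuinely controlled and the $(1+\t_n)$ weights redistribute them into the $x_n$ form. Establishing $\overline{x}_N\in\dom g$ is then immediate, since it is a convex combination of points $y_n, x_1\in\dom g$ and $\dom g$ is convex.
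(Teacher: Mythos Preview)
There is a genuine gap. Inequality~\eqref{main_ineq} is \emph{not} available for an arbitrary $x\in\dom g$: look back at the proof of Lemma~\ref{lemma:main_ineq}. The passage to~\eqref{ineq-22} replaces the coefficient $\la_n\t_n$ in front of $\Psi(\x,x_{n-1})$ by the larger number $\la_{n-1}(1+\t_{n-1})$, and this is an upper bound only because $\Psi(\x,x_{n-1})\ge 0$, which uses $\x\in X^*$. For a general $x$ the sign of $\Psi(x,x_{n-1})$ is unknown, so you must retain the earlier form
\[
\n{x_{n+1}-x}^2 \le \n{x_n-x}^2 - (1-\a(1+\sqrt2))\n{y_n-x_n}^2 - (1-\sqrt2\,\a)\n{x_{n+1}-y_n}^2 + \a\n{x_n-y_{n-1}}^2 - 2\la_n(1+\t_n)\Psi(x,x_n) + 2\la_n\t_n\,\Psi(x,x_{n-1}),
\]
which is precisely what the paper records as~\eqref{eq:rate1}. (Incidentally, even if~\eqref{main_ineq} did hold, summing the telescoping difference $2\la_n(1+\t_n)\Psi(x,x_n)-2\la_{n-1}(1+\t_{n-1})\Psi(x,x_{n-1})$ would yield only the single boundary term $2\la_N(1+\t_N)\Psi(x,x_N)$, not the sum $\sum_n 2\la_n(1+\t_n)\Psi(x,x_n)$ you wrote.)

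This change of coefficient is exactly what drives the rest of the argument. With $\la_n\t_n$ the $\Psi$-terms do \emph{not} telescope; summing over $n=1,\dots,N$ and regrouping by index gives
\[
\la_N(1+\t_N)\Psi(x,x_N) + \sum_{k=1}^{N-1}\bigl[\la_k(1+\t_k)-\la_{k+1}\t_{k+1}\bigr]\Psi(x,x_k) \le \tfrac12\n{x_1-x}^2 + \tfrac{\a}{2}\n{x_1-y_0}^2 + \la_1\t_1\Psi(x,x_0),
\]
with all bracketed weights nonnegative by the algorithmic constraint $\la_{k+1}\t_{k+1}\le(1+\t_k)\la_k$. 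Jensen is then applied to this convex combination of the $x_k$ (not of the $y_k$), and the algebraic identity
\[
\la_N(1+\t_N)x_N + \sum_{k=1}^{N-1}\bigl[\la_k(1+\t_k)-\la_{k+1}\t_{k+1}\bigr]x_k \;=\; \sum_{k=2}^{N}\la_k y_k + (1+\t_1)\la_1 x_1,
\]
obtained from $\la_k y_k = \la_k(1+\t_k)x_k - \la_k\t_k x_{k-1}$, is what rewrites the weighted average as $\overline{x}_N$. So the ``bookkeeping obstacle'' you flag is not a side issue resolved by~\eqref{mono}; it is the core of the proof, and it is handled by this identity together with the correct (non-telescoped) coefficients coming from~\eqref{eq:rate1}.
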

\begin{proof}
If in Lemma~\ref{lemma:main_ineq} we did not use
inequality~\eqref{ineq-22} we would get the following
\begin{align}\label{eq:rate1}
     \n{x_{n+1}-x}^2 \leq {} & \n{x_n-x}^2 - (1-\a (1+\sqrt 2))\n{y_n-x_n}^2 \nonumber \\ & -
    (1-\sqrt 2\a) \n{x_{n+1}-y_n}^2 + \a \n{x_n-y_{n-1}}^2\nonumber \\ & - 2\la_n
    (1+\t_n)\Psi(x, x_n) +  2\la_{n}\t_{n} \Psi(x, x_{n-1}),
\end{align}
from which follows
    \begin{multline}
    \label{eq:rate2}
    \la_n
    (1+\t_n)\Psi(x, x_n) -  \la_{n}\t_{n} \Psi(x, x_{n-1})  \leq
    \frac 1 2 \n{x_n -x}^2 - \frac 1 2\n{x_{n+1}-x}^2 \\
    -\frac{1-\sqrt 2\a}{2} \n{x_{n+1}-y_n}^2 + \frac{\a}{2} \n{x_n-y_{n-1}}^2.
\end{multline}
Summing \eqref{eq:rate2} over $n=1,\dots,N$, we obtain
\begin{align*}
    % \label{eq:rate3}
   \la_N (1+\t_N)\Psi(x, x_N) & +
   \sum_{k=1}^{N-1}[\la_{k}(1+\t_{k})-\la_{k+1}\t_{k+1}]\Psi(x, x_{k})
    \\ &  \leq \frac 1 2\n{x_1 - x}^2 + \frac{\a}{2}
   \n{x_1 - y_0}^2 + \la_1 \t_1\Psi(x, x_0).
\end{align*}
Note that function $\Psi(x,\cdot)$ is convex and all the coefficients
in square brackets are nonnegative due to the assumption of
algorithms. Applying Jensen's inequality to the left side of the above
inequality, we get
\begin{equation*}
(\sum_{k=1}^N \la_k + \t_1\la_1)\Psi(x, \overline{x}_N) \leq \frac 1 2
\n{x_1 - x}^2 + \frac{\a}{2} \n{x_1 - y_0}^2 + \la_1 \t_1 \Psi(x, x_0),
\end{equation*} 
where $$\overline{\la}_N \overline{x}_N = \la_N(1+\t_N)x_N +
\sum_{k=1}^{N-1}[\la_k(1+\t_k)-\la_{k+1}\t_{k+1}]x_k =
\sum_{k=2}^N\la_k y_k + \la_1(1+\t_1)x_1.$$
Evidently, $\overline{x}_N \in \dom g$ which finishes the proof.
\end{proof}

Notice that $\overline{\la}_N \to \infty$ due to Lemma~\ref{separ}
and~\ref{separ2}.

When \eqref{gvi} is a particular case of a composite minimization
problem or  a saddle point problem,  inequality \eqref{rate_ergodic}
can be improved. For simplicity, we
show how to do this only
for the case of constrained optimization.
 
    If $F$ is a gradient of a convex differentiable function $f$,
    i.e. \eqref{gvi} is the result of $\min_{x\in C} f(x)$,
    then
    \begin{equation}\label{rate:conv}
        \lr{F(y_n), y_n -x} \geq f(y_n)-f(x).
    \end{equation}
    Instead of using \eqref{eq:rate1}, we consider Lemma~\ref{basic_lemma1} and Lemma~\ref{basic_lemma2}
    for $g(x)=\d_C(x)$ that give us identical inequality
    \begin{multline}  
    \label{eq:rate2.1}
    \n{x_{n+1}-x}^2\leq \n{x_n -x}^2 - \n{y_n- x_n}^2
    -\n{x_{n+1}-y_n}^2 \\ +2 \a \n{y_n-y_{n-1}}\n{x_{n+1}-y_n} -
    2\la_n \lr{F(y_n), y_n - x}.
\end{multline}
Applying \eqref{rate:conv} and estimation \eqref{estim}, we obtain
    \begin{multline}
    \label{eq:rate2.2}
\la_n(f(y_n) - f(x))\leq
     \frac 1 2\n{x_n -x}^2 - \frac 1 2\n{x_{n+1}-x}^2 - \frac{1-\sqrt
     2\a}{2} \n{x_{n+1}-y_n}^2 + \frac{\a}{2}
     \n{x_n-y_{n-1}}^2 
\end{multline}
Using the same arguments as in Theorem~\ref{th:rate}, we get
$$    f(\overline{x}_N) - f(x)\leq \frac{\n{x_1 - x}^2 + \a
   \n{x_1 - y_0}^2 + 2\t_1 \la_1 (f(x_0)-f(x))}{2 \overline\la_N} \quad \forall x\in C.$$

\section{Composite minimization} \label{sec:alg3}
When $F$ is a gradient of a convex
function, problem \eqref{gvi} is equivalent to a problem of a
composite minimization
\begin{equation}
    \label{comp_min} 
    \min_x \Phi(x):= f(x) + g(x),
\end{equation}
where we assume that
\begin{itemize}
    \item[{\bf A5}] $f\colon \E \to \R$ is a convex differentiable
    function with locally Lipschitz gradient $\nabla f$.
\end{itemize}
To highlight the specificity, instead of $F$ we will write $\nabla f$. We
denote $\Phi^*=\min_x \Phi(x)$. Throughout this section we suppose
that A2-A5 hold.

% \fbox{
% \begin{minipage}[t]{1.0\linewidth}
% \vspace{5pt}
% \quad \text{\bf Algorithm 3 }
% \vspace{0.5cm}
% \begin{description}
% \item[Initialization.] Choose
%  $\a \in(0, \sqrt 2 -1)$, $\la_{max} > 0$, $\sigma \in (0,1)$, $\th
%  \in [1, +\infty)$, 
%  $x_1,x_0, y_0 \in \E$, $\la_0 > 0$. Set $\t_0 = 1$.

% \item[Step 1.]  \textbf{Linesearch} \par

% For given $x_n$, $x_{n-1}$, $y_{n-1}$, $\la_{n-1}$,
% $\t_{n-1}$ take $i = 0$ and run \\
% \texttt{repeat}

% \qquad  Take $\t_n =   \sqrt{\dfrac{1 + \th \t_{n-1}}{2\th -1}} \sigma^i $ and 
% \begin{align*}
% y_n& = x_n +\t_n (x_n - x_{n-1}) \\
% \la_n & = \min\{(2-\frac 1 \th)\t_n \la_{n-1}, \la_{max}\}.
% \end{align*}

% \qquad \texttt{if   } $\la_n \n{\nf{y_{n}}-\nf{y_{n-1}}}\leq \a (2-\frac 1
% \th)
% \n{y_n - y_{n-1}}$

% \qquad \qquad \texttt{then: break}

% \qquad \texttt{else:} $i = i+1$

% \item[Step 2.] Compute
% \begin{equation} \label{xn_main_prox2}
% x_{n+1}= \prox_{\la_n g}(x_n - \la_n \nf{y_n})
% \end{equation}
% Set $n = n+1$ and go to Step 1.
% \end{description}
% \end{minipage}
% }  \vspace{5pt}

\begin{algorithm}[!ht]\caption{\textit{}}
    \label{alg:A3}
\begin{algorithmic} 
    \STATE {\bfseries Initialization:} Choose $\a \in(0, \sqrt 2 -1)$,
    $\la_{max} > 0$, $\sigma \in (0,1)$, $\th \in [1, 2]$,
    $x_1,x_0, y_0 \in \E$, $\la_0 > 0$. Set $\t_0 = 1$.  \STATE
    {\bfseries Main iteration:} \STATE 1. For given $x_n$, $x_{n-1}$,
    $y_{n-1}$, $\la_{n-1}$, $\t_{n-1}$ set $i = 0$ and run \STATE ~~~
    \textbf{Linesearch:} \STATE ~~~1.a. Set \vspace{-3ex}
\begin{align*}
    \t_{n} & = \begin{cases} \sqrt{\dfrac{1 + \th \t_{n-1}}{2\th -1}}
        \sigma^i, \text{ if } \la_{n-1}\leq \frac 1 2 \la_{max}, \\
        \sigma^i, \text{otherwise},
    \end{cases} \\
    y_n & = x_n + \t_n (x_n - x_{n-1}),\\
    \la_n & = (2-\frac 1 \th)\t_n \la_{n-1}.
\end{align*}
\vspace{-3ex}
	\STATE ~~~1.b. Break linesearch loop if
    \begin{equation}\label{lipsch_main3} 
\la_n \n{\nf{y_{n}}-\nf{y_{n-1}}}\leq \a (2-\frac 1 \th) \n{y_n - y_{n-1}}.
\end{equation}
\vspace{-3ex}
\STATE \qquad Otherwise,  set $i:= i+1$ and go to 1.a.

	\STATE ~~\textbf{End of linesearch}
    \STATE 2. Compute $x_{n+1} = \prox_{\la_n g}(x_n - \la_n \nf{y_n})$.
\STATE{\bfseries Output:}  
Return $x_n$ and $y_n$.
\end{algorithmic}
\end{algorithm}
Note that the stopping criteria of the linesearch procedure is the same as
in Algorithm~2:
\begin{equation} \label{eq:equiv_lip}
    \t_n \la_{n-1}\n{\nf{y_n} - \nf{y_{n-1}}} \leq \a \n{y_n -
y_{n-1}}.
\end{equation}
Moreover, for $\th = 1$ Algorithm~3 is identical to Algorithm~2. In
turn, for $\th > 1$ the stepsize $\la_n = (2-\frac 1
\th)\t_n\la_{n-1}$ is larger than in Algorithm~2.

Result stated in Lemma~\ref{well-def2}  hold for
Algorithm~3 as well. Since its proof is identical, we omit it. However, the main ingredient to prove a convergence of
$(x_n)$ differs from Lemmas~\ref{basic_lemma2} and~\ref{lemma:main_ineq}.
\begin{lemma}
    \label{basic_lemma3}
    For $(x_n)$, $(y_n)$ defined in Algorithm 3 and $x\in \E$ the following
    inequality holds
    \begin{align}\label{main_ineq3}
 \n{x_{n+1} - x}^2 \leq {} &  \n{x_n- x}^2 - (2\th - 1)\Bigl((1-\a (1+\sqrt 2))\n{x_n-y_n}^2 \nonumber \\ & -
 (1-\sqrt 2\a) \n{x_{n+1}-y_n}^2 + \a \n{x_n-y_{n-1}}^2\Bigr)\nonumber
 \\ & -2\la_n ((1+\th \t_n)\Phi(x_n) - \th \t_{n} \Phi(x_{n-1}) - \Phi(x)). 
\end{align}
\end{lemma}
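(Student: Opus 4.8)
The plan is to follow the template of Lemma~\ref{basic_lemma2}, but to replace the monotonicity argument of Lemma~\ref{lemma:main_ineq} by convexity of $f$, and to carry the extra parameter $\th$ through all the estimates. First I would write the prox characterization (Lemma~\ref{prox_charact}) for the two consecutive steps: the inequality for $x_{n+1}=\prox_{\la_n g}(x_n-\la_n\nf{y_n})$ tested at an arbitrary $x$, and the inequality for $x_n=\prox_{\la_{n-1}g}(x_{n-1}-\la_{n-1}\nf{y_{n-1}})$ tested at $x=x_{n+1}$ and at $x=x_{n-1}$. Taking the $\t_n$-weighted combination of the latter two and using $\t_n(x_n-x_{n-1})=y_n-x_n$ collapses them into a single three-point inequality in the direction $x_{n+1}-y_n$, exactly as in the passage leading to~\eqref{2.xn}. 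The difference from Algorithm~2 is that I must now rescale this inequality by $\la_n/\la_{n-1}=(2-\frac1\th)\t_n$ so that $\nf{y_{n-1}}$ acquires the coefficient $\la_n$; adding the step-$n$ inequality then cancels the $g(x_{n+1})$ contributions and produces the telescoping gradient term $\la_n\lr{\nf{y_n}-\nf{y_{n-1}},\,y_n-x_{n+1}}$.

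The genuinely new ingredient is how the gradient term is turned into function values. Instead of monotonicity I would use the subgradient inequality $\lr{\nf{y_n},y_n-x}\g f(y_n)-f(x)$, together with the observation that $x_n$ is the convex combination $x_n=\frac{1}{1+\t_n}y_n+\frac{\t_n}{1+\t_n}x_{n-1}$, whence convexity of $f$ gives $f(y_n)\g(1+\t_n)f(x_n)-\t_n f(x_{n-1})$. Combined with the $g$-values coming from the prox characterization, this assembles the composite objective $\Phi=f+g$ in the required three-point form. The telescoping gradient term is then bounded by the linesearch stopping rule in its equivalent form~\eqref{eq:equiv_lip} and split into squared norms by the estimate~\eqref{estim}; the remaining two inner products are handled by the cosine rule.

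The hard part is the bookkeeping of the parameter $\th$. A direct imitation of Lemma~\ref{basic_lemma2} produces the \emph{weaker} coefficients $(1+\t_n)$ and $\t_n$ on the $\Phi$-values and the factor $2-\frac1\th=\frac{2\th-1}{\th}$ on the squared norms, i.e.\ in every place the target carries an extra factor $\th$ (compare $\th\t_n$ with $\t_n$, and $2\th-1$ with $\frac{2\th-1}{\th}$). Moreover the cosine rule no longer cancels $\n{x_{n+1}-x_n}^2$ when $\th\neq1$, leaving a residual term. Supplying this missing $\th$ and absorbing the residual $\n{x_{n+1}-x_n}^2$ is precisely where the special stepsize rule $\la_n=(2-\frac1\th)\t_n\la_{n-1}$ and the recursion $(2\th-1)\t_n^2\l 1+\th\t_{n-1}$ encoded in the choice of $\t_n$ must be used. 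I would organize the final algebra so that $\th=1$ collapses the statement back to Lemma~\ref{basic_lemma2} (with $\Phi$ in place of $g$), which is the natural consistency check; getting all constants to match simultaneously for general $\th\in[1,2]$ is the main obstacle.
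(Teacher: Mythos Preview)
Your setup is essentially the paper's: the two prox characterizations, the $\t_n$-weighted combination, the rescaling by $\la_n/\la_{n-1}=(2-\tfrac1\th)\t_n$, the convexity step $f(y_n)\g(1+\t_n)f(x_n)-\t_n f(x_{n-1})$, the cosine rule, and the estimate~\eqref{estim} --- all of this matches. You also correctly identify the difficulty: after the cosine rule one is left with a residual $(1-\tfrac1\th)\n{x_{n+1}-x_n}^2$ on the right, and the $\Phi$-coefficients are $(1+\t_n)$, $\t_n$ rather than the target $(1+\th\t_n)$, $\th\t_n$.

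The gap is in how you propose to close this. You attribute the missing factor $\th$ and the absorption of the residual to the recursion $(2\th-1)\t_n^2\l 1+\th\t_{n-1}$. That recursion plays no role in this lemma; it is used only later, in Theorem~\ref{main_th2}, to telescope $\la_n\th\t_n(\Phi(x_{n-1})-\Phi^*)$ against $\la_{n-1}(1+\th\t_{n-1})(\Phi(x_{n-1})-\Phi^*)$ across iterations. It cannot help here because it relates consecutive $\t$'s, whereas the residual $(1-\tfrac1\th)\n{x_{n+1}-x_n}^2$ is a bare positive term that has nothing to do with $\t_{n-1}$.

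The actual device is different and quite short. After the cosine rule the intermediate inequality can be written, for \emph{every} $x\in\E$, as
\[
\n{x_{n+1}-x}^2\;\le\;\n{x_n-x}^2+\Bigl(1-\tfrac1\th\Bigr)\n{x_{n+1}-x_n}^2+A-2\la_n\bigl(\Phi(x_n)-\Phi(x)\bigr),
\]
where $A$ collects the remaining terms (the $(2-\tfrac1\th)$-scaled squared norms, the gradient cross term, and $-2\la_n\t_n(\Phi(x_n)-\Phi(x_{n-1}))$). Specializing $x=x_n$ kills both $\n{x_n-x}^2$ and the last $\Phi$-term and yields $\n{x_{n+1}-x_n}^2\le\th A$. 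Substituting this bound back into the same inequality for general $x$ replaces $(1-\tfrac1\th)\n{x_{n+1}-x_n}^2+A$ by $\th A$. Multiplying $A$ by $\th$ is precisely what turns $(2-\tfrac1\th)$ into $(2\th-1)$ on the squared norms and $\t_n$ into $\th\t_n$ on the $\Phi$-coefficients; then the linesearch bound~\eqref{lipsch_main3} and~\eqref{estim} finish as you indicated. This ``evaluate at $x=x_n$ and feed back'' step is the missing idea in your plan.
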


\begin{proof}
    With the same arguments as in \eqref{eq:2.1} and \eqref{2.xn} we
    get
    \begin{equation}
        \label{3:1}
           \lr{x_{n+1} - x_n + \la_n \nf{y_n}, x-x_{n+1}} \g \la_n (g(x_{n+1})-g(x)) \quad \forall x\in \E,
    \end{equation}
    and
    \begin{multline*}
%    \label{3:2}
    \lr{x_{n} - x_{n-1} + \la_{n-1} \nf{y_{n-1}}, x_{n+1}-y_{n}} \g \la_{n-1}
((1+\t_n)g(x_n) - g(x_{n+1}) - \t_n g(x_{n-1}) ).
\end{multline*}
Using that  $\t_n(x_n-x_{n-1}) = y_n - x_n$ and $\la_n = (2-\frac 1 \th)\t_n \la_{n-1}$, we
get
\begin{multline}
    \label{3:3}
    \lr{(2-\frac 1 \th)(y_n-x_n) + \la_{n} \nf{y_{n-1}}, x_{n+1}-y_{n}} \g \la_{n}
((1+\t_n)g(x_n) - g(x_{n+1}) - \t_n g(x_{n-1}) ).
\end{multline}
By convexity of $f$,
\begin{equation} \label{conv}
 \lr{\nf{y_n}, y_n - x} \geq f(y_n)-f(x) \geq 
(1+\t_n)f(x_n)-\t_n f(x_{n-1}) - f(x).
\end{equation}
Summing \eqref{3:1}, \eqref{3:3}, and \eqref{conv}, multiplied by
$\la_n$, we obtain
\begin{multline}
    \label{add_all2}
 \lr{x_{n+1} - x_n , x-x_{n+1}} + (2-\frac 1 \th) \lr{y_{n} - x_{n}, x_{n+1}-y_{n}} \\
 + \la_n \lr{\nf{y_n} - \nf{y_{n-1}}, y_n -
 x_{n+1}} \g \la_n ((1+\t_n)\Phi(x_n) - \t_n \Phi(x_{n-1}) - \Phi(x)).   
\end{multline} 
Notice that for $\th = 1$ \eqref{add_all2} is very similar to 
\eqref{add_all}. Their distinction caused only by using convexity of
$f$ in \eqref{conv}. As usually, by the cosine rule we can rewrite the
above as
\begin{align}\label{3:4}
 \n{x_{n+1}-x}^2 \leq \n{x_n -x}^2 & - \n{x_{n+1}-x_n}^2 \nonumber\\
& + (2-\frac{1}{\th})\bigl(\n{x_{n+1}-x_n}^2 - \n{y_n-x_n}^2 -
 \n{x_{n+1}-y_n}^2 \bigr) \nonumber\\ & +2 \la_n\lr{\nf{y_n} - \nf{y_{n-1}}, y_n -
 x_{n+1}} \nonumber\\ & - 2\la_n ((1+\t_n)\Phi(x_n) - \t_n\Phi(x_{n-1}) - \Phi(x)).
\end{align}
 Let
\begin{multline}
    A = - (2-\frac 1 \th)\bigl(\n{y_n-x_n}^2 +
 \n{x_{n+1}-y_n}^2\bigr)  \\ + 2 \la_n\lr{\nf{y_n} - \nf{y_{n-1}}, y_n -
 x_{n+1}} - 2\la_n\t_n(\Phi(x_n) - \Phi(x_{n-1}) ).
\end{multline}
Then \eqref{3:4} is equivalent to
\begin{equation}\label{withA}
\n{x_{n+1}-x}^2 \leq  \n{x_n -x}^2 + (1 - \frac{1}{\th})
\n{x_{n+1}-x_n}^2 + A  - 2\la_n (\Phi(x_n)  - \Phi(x)).
\end{equation}
Recall that inequality \eqref{withA} holds for every $x\in \E$. Thus,
taking $x=x_n$, we obtain
\begin{equation*}
\n{x_{n+1}-x_n}^2\leq  (1 - \frac{1}{\th})
\n{x_{n+1}-x_n}^2 + A.
\end{equation*}
Hence, $\n{x_{n+1}-x_n}^2\leq \th A.$
Applying to~\eqref{withA}, this yields
\begin{multline*}
    %\label{withA2}
\n{x_{n+1}-x}^2\leq  \n{x_n -x}^2 + (1 - \frac{1}{\th})\th A
 + A  - 2\la_n (\Phi(x_n)  - \Phi(x)) \\ = \n{x_n -x}^2 + \th A
 - 2\la_n (\Phi(x_n)  - \Phi(x)).
\end{multline*} 
Using that $\la_n \n{\nf{y_n} - \nf{y_{n-1}}}\leq (2-\frac 1 \th) \a \n{y_n -
y_{n-1}}$, we deduce
\begin{align*}%\label{final}
  \n{x_{n+1}-x}^2\leq  \n{x_n -x}^2 & -  (2\th - 1)  \bigl(\n{y_n-x_n}^2 +
  \n{x_{n+1}-y_n}^2 \nonumber \\ & -  2\a\n{y_n - y_{n-1}}\n{x_{n+1}-y_n} \bigr) \nonumber \\
  & - 2\la_n ((1+\th \t_n)\Phi(x_n) - \th \t_n\Phi(x_{n-1}) - \Phi(x)) 
\end{align*} 
To complete the proof it only remains to use~\eqref{estim}.
\end{proof}

Unfortunately, we are not able to show that the whole sequence
$(\la_n)$ is separated from zero. This is because the first iteration
of the linesearch may start from $\t_n<1$. To show that $(\la_n)$ does
not converge to $0$, we need to apply a bit more complex arguments
than ones in Lemma~\ref{separ}.
\begin{lemma}
    \label{separ3}
    Assume that the sequence $(x_n)$, generated by Algorithm~3, is
    bounded. Then $\limsup_{n\to \infty} \la_{n} > 0$.
\end{lemma}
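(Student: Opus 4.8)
The plan is to follow the template of the proof of Lemma~\ref{separ2}, adjusting for the extra factor $(2-\frac1\th)$ in the stepsize and for the different $\t$-recursion. Since $(x_n)$ is bounded and the multipliers $\t_n$ are bounded (as in the proof of Lemma~\ref{well-def2}), the sequence $(y_n)$ — together with all trial points, which lie in a fixed bounded set — is bounded, so local Lipschitzness of $\nabla f$ yields some $L>0$ with $\n{\nf{y_n}-\nf{y_{n-1}}}\l\a L\n{y_n-y_{n-1}}$. The crucial observation, exactly as before, is that any trial stepsize with $\t\la_{n-1}\l\frac1L$ automatically satisfies the stopping test \eqref{eq:equiv_lip}; hence the linesearch must break as soon as a trial reaches $\t\la_{n-1}\l\frac1L$.

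Assume, for contradiction, that $\limsup_{n\r\infty}\la_n=0$, i.e.\ $\la_n\r 0$. I would split the analysis according to whether backtracking occurs. If step $n$ uses backtracking ($i_n\g1$), then the penultimate trial $\t_n'=\sqrt{\frac{1+\th\t_{n-1}}{2\th-1}}\sigma^{i_n-1}$ failed the test, so by the observation above $\t_n'\la_{n-1}>\frac1L$; since $\t_n=\sigma\t_n'$ and $2-\frac1\th\g1$, this gives $\la_n=(2-\frac1\th)\t_n\la_{n-1}=\sigma(2-\frac1\th)\t_n'\la_{n-1}>\frac\sigma L$. Thus any backtracking step keeps $\la_n$ above the positive constant $\frac\sigma L$, contradicting $\la_n\r 0$. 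Consequently there is an index $N$ (chosen large enough that also $\la_{n-1}\l\frac12\la_{max}$) such that for all $n\g N$ the linesearch terminates at $i_n=0$ on the first branch, i.e.\ $\t_n=h(\t_{n-1})$ with $h(\t):=\sqrt{\frac{1+\th\t}{2\th-1}}$ and $\la_n=(2-\frac1\th)\t_n\la_{n-1}$.

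The heart of the argument is then to analyse this clean recursion. The map $h$ is increasing and concave on $[0,\infty)$ with a unique positive fixed point $\t^*$ solving $(2\th-1)(\t^*)^2=1+\th\t^*$; from any positive starting value the iterates converge monotonically, so $\t_n\r\t^*$. Dividing the fixed-point identity by $\t^*$ gives the key inequality $(2-\frac1\th)\t^*=\frac{1}{\th\t^*}+1>1$. By continuity, $\t_n\r\t^*$ forces $(2-\frac1\th)\t_n>1$ for all large $n$, that is $\la_n>\la_{n-1}$ eventually. An eventually increasing sequence of positive numbers cannot tend to $0$, contradicting $\la_n\r 0$; hence $\limsup_{n\r\infty}\la_{n}>0$.

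The main obstacle I anticipate is the algebraic core: verifying that the limiting stepsize multiplier $(2-\frac1\th)\t^*$ strictly exceeds $1$ (so that no-backtracking steps ultimately grow $\la_n$), together with a careful justification that $\t_n\r\t^*$ irrespective of the value $\t_{N-1}$ inherited from a possibly-backtracking earlier step. Everything else is a transcription of the reasoning in Lemma~\ref{separ2}; and the reason only a $\limsup$ — rather than the $\liminf$ of Lemma~\ref{separ2} — is attainable is precisely that the first trial $\t_n=h(\t_{n-1})$ may fall below $1$, so $\la_n$ can temporarily decrease before the recursion pushes $\t_n$ past $\frac{\th}{2\th-1}$.
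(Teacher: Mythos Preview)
Your proof is correct and takes a genuinely different route from the paper's. The paper argues locally: once $\la_{n-1}<\frac{1}{2L}$, it shows that $\la_n$ and $\la_{n+1}$ cannot \emph{both} be smaller than $\la_{n-1}$, by observing that two non-backtracking steps give $\la_{n+1}=\bigl(\tfrac{2\th-1}{\th}\bigr)^2\t_{n+1}\t_n\la_{n-1}$ and then reducing $\la_{n+1}<\la_{n-1}$, via the crude bounds $\t_n\g \tfrac{1}{\sqrt{2\th-1}}$ and $\th\t_n\g 1$, to the inequality $(2\th-1)\sqrt{2}<\th^2$, which fails on $[1,2]$. You instead pass to the limit: after ruling out backtracking (which forces $\la_n>\tfrac{\sigma}{L}$) you are left with the pure recursion $\t_n=h(\t_{n-1})$, identify its unique positive fixed point $\t^*$, and read off $(2-\tfrac{1}{\th})\t^*=\tfrac{1}{\th\t^*}+1>1$ directly from the fixed-point identity, so the stepsize multiplier eventually exceeds $1$. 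The paper's approach is more elementary (no limit of $\t_n$ needed) but the algebraic inequality it rests on is tailored to $\th\in[1,2]$ --- indeed the paper remarks just after the proof that enlarging the upper bound on $\th$ would complicate the argument. Your fixed-point identity, by contrast, makes the inequality $(2-\tfrac1\th)\t^*>1$ automatic for every $\th>\tfrac12$, with no case analysis. One small inaccuracy: you cite Lemma~\ref{well-def2} for the a-priori bound on $\t_n$, but for Algorithm~3 the relevant bound is $\t_n<2$ (asserted in the paper's own proof); either suffices to confine the trial points to a bounded set.
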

\begin{proof}
Since $(x_n)$ is bounded, there exists $L>0$ such that
$$\n{F(y_n) - F(y_{n-1})}\leq \a L \n{y_n - y_{n-1}}.$$
Also it is not difficult to show by induction that $\t_n < 2$ for
all $n$. Let $\la_{n-1}< \frac{1}{2L}$. We show that at least one of
$\la_n$ or $\la_{n+1}$ is larger or equal than $\la_{n-1}$. Evidently, from this the assertion of lemma
follows. 

On the contrary, assume that $\la_{n+j}<\la_{n-1}$ for $j=0,1$.
Due to $\la_{n+j}<\frac{1}{2L}$, $\t_{n+j}<2$, and \eqref{eq:equiv_lip}, the linesearch procedure in Algorithm 2
must terminate after the first iteration. This means that
$\t_n = \sqrt{\frac{1+\t_{n-1}\th}{2\th-1}}$ and $\t_{n+1}=
\sqrt{\frac{1+\t_{n}\th}{2\th-1}}$. From our assumption we have  $$\la_{n+1} = \left(2-\frac 1
\th\right)\t_{n+1}\la_{n} = \left(\frac{2\th
    -1}{\th}\right)^2\t_{n+1}\t_n\la_{n-1} < \la_{n-1}.$$
Using that $\t_n\geq \frac{1}{\sqrt{2\th-1}}$, we get
$$\frac{2\th -1}{\th^2}\sqrt{1+\t_n\th}< 1.$$
Note that $\t_n\th \geq \frac{\th}{\sqrt{2\th - 1}}\geq 1$. This implies $(2\th-1)\sqrt{2}<\th^2$. But
the latter inequality does not hold for any $\th\in [1,2]$.
This contradiction finishes the proof.
\end{proof}
 In fact, the upper bound for $\th$ can be enlarged, but
then the proof of Lemma~\ref{separ3} will be more
complicated. Perhaps larger $\th$ seems to be a better choice
because $(2-\frac 1 \th)$ will increase. However, in this case the
bound $\frac{1+\tau_{n-1} \th}{2\th-1}$ will decrease and in the
result we may get even smaller
$\la_n$. So, one can see $\th=2$ as a trade-off between those two
bounds. Numerical experiments also approved $\th =2$ as the best choice.

\begin{theorem}\label{main_th2}
    Let sequences $(x_n)$ and $(y_n)$ be generated by
    Algorithm~3. Then $(x_n)$ and $(y_n)$ converge to a
    solution of~\eqref{comp_min}.
\end{theorem}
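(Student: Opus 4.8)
The plan is to mirror the proof of Theorem~\ref{main_th}, now built on Lemma~\ref{basic_lemma3} in place of Lemma~\ref{lemma:main_ineq}, on Lemma~\ref{separ3} in place of Lemma~\ref{separ2}, and with the objective gap $\Phi(x_n)-\Phi(\x)$ playing the role of $\Psi(\x,x_n)$. Fix a solution $\x$ of \eqref{comp_min} (which exists by A4); since $\x$ minimizes $\Phi$ we have $\Phi(x_n)-\Phi(\x)\geq 0$ for every $n$.

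First I would prove boundedness of $(x_n)$ and decay of the increments. Taking $x=\x$ in Lemma~\ref{basic_lemma3} and writing the objective term as $-2\la_n(1+\th\t_n)(\Phi(x_n)-\Phi(\x))+2\la_n\th\t_n(\Phi(x_{n-1})-\Phi(\x))$, I set
\begin{align*}
a_{n+1} &= \n{x_{n+1}-\x}^2 + (2\th-1)\a\n{x_{n+1}-y_n}^2 + 2\la_n(1+\th\t_n)(\Phi(x_n)-\Phi(\x)),\\
b_n &= (2\th-1)(1-\a(1+\sqrt 2))\bigl(\n{x_n-y_n}^2+\n{x_{n+1}-y_n}^2\bigr).
\end{align*}
The positive term $(2\th-1)\a\n{x_n-y_{n-1}}^2$ in \eqref{main_ineq3} is exactly the tail of $a_n$, so it telescopes; the objective part telescopes because $\la_n\th\t_n\leq(1+\th\t_{n-1})\la_{n-1}$, which the $\t_n$-update guarantees through $\t_n^2\leq\frac{1+\th\t_{n-1}}{2\th-1}$ together with $\la_n=(2-\frac1\th)\t_n\la_{n-1}$ and $\Phi(x_{n-1})-\Phi(\x)\geq 0$. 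This reduces \eqref{main_ineq3} to $a_{n+1}\leq a_n-b_n$, where $a_n\geq 0$ and, since $\a<\sqrt 2-1$ forces $1-\a(1+\sqrt 2)>0$, also $b_n\geq 0$. Lemma~\ref{lim_seq} then yields boundedness of $(a_n)$ and $b_n\to 0$, whence $(x_n)$ is bounded, $\n{x_n-y_n}\to 0$, $\n{x_{n+1}-y_n}\to 0$, and therefore $\n{x_{n+1}-x_n}\to 0$ and $(y_n)$ is bounded.

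Next I would identify a cluster point as a minimizer. By Lemma~\ref{separ3} and boundedness there is an increasing $(n_k)$ with $(\la_{n_k})$ bounded away from zero and $x_{n_k}\to x^*$, and then $y_{n_k}\to x^*$ as well. I would write the optimality of the proximal step $x_{n_k+1}=\prox_{\la_{n_k}g}(x_{n_k}-\la_{n_k}\nf{y_{n_k}})$ via Lemma~\ref{prox_charact} as
\begin{equation*}
\lr{\tfrac{x_{n_k+1}-x_{n_k}}{\la_{n_k}},x-x_{n_k+1}}+\lr{\nf{y_{n_k}},x-x_{n_k+1}}\geq g(x_{n_k+1})-g(x)\quad\forall x\in\E,
\end{equation*}
and pass to the lower limit. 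Using that $(\la_{n_k})$ is separated from zero, $\n{x_{n_k+1}-x_{n_k}}\to 0$, continuity of $\nabla f$ (A5) and lower semicontinuity of $g$, the first bracket disappears and I obtain $\lr{\nf{x^*},x-x^*}+g(x)-g(x^*)\geq 0$ for all $x$; this is \eqref{gvi} with $F=\nabla f$, hence $x^*$ minimizes $\Phi$, i.e.\ $x^*\in X^*$.

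Finally I would upgrade to convergence of the whole sequence. With $\x=x^*$ the recursion $a_{n+1}\leq a_n-b_n$ makes $(a_n^*)$ nonincreasing, hence convergent. Along $(n_k)$ the two quadratic terms of $a_{n_k+1}^*$ tend to $0$, while, since $f$ is continuous, $g$ is continuous on $\dom g$ by A3 with all $x_{n_k}\in\dom g$, and $(\la_n)$ is bounded, the gap term $2\la_{n_k}(1+\th\t_{n_k})(\Phi(x_{n_k})-\Phi(x^*))\to 0$; thus $a_{n_k+1}^*\to 0$, so the full limit of $(a_n^*)$ is $0$. Consequently $\n{x_n-x^*}\to 0$, and $\n{x_n-y_n}\to 0$ gives $y_n\to x^*$. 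The most delicate point is the telescoping of the second paragraph: it rests on the precise $\t_n$-update $\t_n^2\leq\frac{1+\th\t_{n-1}}{2\th-1}$ and on boundedness of $(\la_n)$, which the $\tfrac12\la_{max}$-branching is designed to secure; the concluding passage from the subsequence to the whole sequence is exactly where A3 is indispensable, precisely as in Theorem~\ref{main_th}.
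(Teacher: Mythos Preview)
Your proof is correct and follows the paper's own argument essentially verbatim: the paper also derives the telescoping bound $\la_n\th\t_n(\Phi(x_{n-1})-\Phi^*)\leq\la_{n-1}(1+\th\t_{n-1})(\Phi(x_{n-1})-\Phi^*)$ from $\t_n\leq\sqrt{\frac{1+\th\t_{n-1}}{2\th-1}}$ and $\la_n=(2-\frac{1}{\th})\t_n\la_{n-1}$, then defines the same $a_{n+1}$ and $b_n$ and states that ``the rest of the proof almost coincides with the proof of Theorem~\ref{main_th}.'' You have simply spelled out that remainder (boundedness via Lemma~\ref{lim_seq}, subsequential optimality via Lemma~\ref{separ3} and Lemma~\ref{prox_charact}, and the passage to the full sequence using A3) in the expected way.
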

\begin{proof}
From $\t_n \leq   \sqrt{\dfrac{1 + \th \t_{n-1}}{2\th -1}}$ and $\la_n =
(2-\frac 1 \th) \t_n \la_{n-1}$ it follows
\begin{equation}
    \label{3:5} 
    \la_n \th \t_n (\Phi(x_{n-1}) - \Phi^* ) \leq \la_{n-1}(1+\th \t_{n-1})(\Phi(x_{n-1}) - \Phi^*)
\end{equation} 
Applying \eqref{3:5} to \eqref{main_ineq3} with $x=\x\in X^*$, we get
\begin{align}\label{main_ineq4} 
 \n{x_{n+1} - \x}^2 \leq {} &  \n{x_n- \x}^2 - (2\th - 1)\Bigl((1-\a (1+\sqrt 2))\n{x_n-y_n}^2 \nonumber \\ & -
 (1-\sqrt 2\a) \n{x_{n+1}-y_n}^2 + \a \n{x_n-y_{n-1}}^2\Bigr)\nonumber
 \\ & -2\la_n (1+\th \t_n)(\Phi(x_n)- \Phi^*) +  2\la_{n-1}(1+\th \t_{n-1}) (\Phi(x_{n-1}) - \Phi^*) 
\end{align}
With sequences $(a_n)$ and $(b_n)$ given by
\begin{align*}
a_{n+1} & = \n{x_{n+1}-\x}^2  + (2\th -1)\a \n{x_{n+1}-y_n}^2 + 2\la_n(1 +
\th \t_n)(\Phi(x_n) - \Phi^*)\\
b_n & = (2\th-1)(1-\a (1+\sqrt 2))\bigl(\n{x_n-y_n}^2 + \n{x_{n+1}-y_n}^2\bigr)
\end{align*}
 the rest of the proof almost coincides with the proof of Theorem~\ref{main_th}.
\end{proof}

When $\nabla f$ is $L$--Lipschitz-continuous then Algorithm~3 allows
us to use a fixed stepsize $\la \in (0, \frac{\a(2\th -1)}{\th
L})$. In this case, taking $\t= \t_n = \frac{\th}{2\th -1}$, steps 1
and 2 of Algorithm~3 can be written as
\begin{align*}
        y_n &= x_n + \frac{\th}{2\th-1} (x_n - x_{n-1})\\
        x_{n+1} &= \prox_{\la g}(x_n - \la \nabla f(y_n)).
\end{align*}
If $\th = 1$ this scheme reduces to the basic
reflected proximal gradient method.

Using Lemma~\ref{basic_lemma3} we can derive the same ergodic rate of
convergence of Algorithm~3 as in Section~\ref{sec:rate}.

\section{Comparison} \label{sec:comparison}
In case of \eqref{min} under the assumption that $\nabla f$
is Lipschitz-continuous there are many possible linesearch rules
for proximal methods. One of the most simple is
Goldshtein-Armijo-like procedure
proposed in \cite{fista}. The proximal gradient method with this
backtracking rule 
generates $(x_n)$ by the following scheme
\begin{flushleft}
    for given $x_{n}$, $\la_{n-1}$ take $\b \in(0,1)$, $\la = \la_{n-1}$ and run\\
    \texttt{repeat}\\
    \quad $z = \prox_{\la g} (x_n -\lambda\nf{x_n})$\\
    \quad \texttt{break if}\\
    \qquad\qquad $f(z) \leq f(x_n) + \lr{\nf{x_n}, z-x_n} + \frac{1}{2\la} \n{x_n-z}^2$\\
    \quad    \texttt{update} $\la:=\b \la$\\
    \texttt{return $\la_n:= \la$, $x_{n+1}:= z$}
\end{flushleft}
Each iteration of such backtracking requires computation of $f$ and
$\prox_{\la g}$. And even if the linesearch terminates in one
iteration, we have to compute $f(z)$ and $f(x_n)$ in order to make
sure that the stopping criteria of the linesearch is satisfied.
Moreover, the sequence of stepsizes $(\la_n)$ must be nonincreasing.

Although we are aware that there are several methods \cite{nesterov07,scheinberg14,birgin-raydan,bb-gradient} for optimization
problems that allow to use nonmonotone steps, we do not consider
them.  In any case one can hardly cover all the methods in one paper, so we have chosen only the few most widespread
methods. Another reason is that we want to emphasize the importance of the algorithms that
use nonmonotone steps. A detailed comparison of our methods with other
optimization methods remains for future research.

% Usually their analysis is based on the Lipschitz assumption and
% each iteration of their linesearch is computationally more demanding
% Also often they work well only for a particular class of problems.

% The latter issue motivated to a fruitful research However, all these
% variants are more complicated since they need evaluation of
% $\prox_{\la g}$ in each iteration of the linesearch and often they are quite
% heuristic. Moreover, to the author knowledge all of them require
% assumption on Lipschitz-continuity of $\nabla f$.

For a general problem \eqref{gvi} one can apply the  forward-backward-forward method
proposed by Tseng~\cite{tseng00}. It generates the sequence $(x_n)$ by the
following rule
\begin{flushleft}
    for given $x_{n}$, $\la_{n-1}$, $\th \in (0,1)$ take $\b
    \in(0,1)$, $\la = \delta \la_{n-1}$ and run\\
    \texttt{repeat}\\
    \quad $z = \prox_{\la g} (x_n -\lambda F(x_n)) $\\
    \quad \texttt{break if}\\
    \qquad\qquad $\la \n{F(z) - F(x_n)} \leq \th \n{z  - x_n}$\\
    \quad    \texttt{update} $\la:= \b \la$\\
    \texttt{return $\la_n:= \la$, $z_{n}:= z$}\\
    Compute $x_{n+1}  = z_n - \la_n (F(z_n)-F(x_n))$.
\end{flushleft}
The choice of value $\delta$ is quite important. Originally in the
paper $\delta = 1$. However, this exclude possibility to
enlarge stepsizes. As heuristic we propose to use $\delta > 1$ and instead
control boundedness of $(\la_n)$.

Evidently, the stopping criteria of the linesearch in Tseng's method is very similar to
\eqref{lipsch_main2}. However, each iteration of the former requires
evaluation of $z$.  In the same time, the Tseng's method is more
general, as it allows to solve more general problems and requires only
continuity of $F$.

Recently, there appeared paper \cite{cruz15} in which the authors
applied Tseng's method for problem \eqref{min}. Using the specificy of
the problem, they proposed novel linesearch procedures and
obtained the complexity results for their methods. One of such method
is the following
\begin{flushleft}
    for given $x_{n}$,  $\th \in (0,1)$  compute \\
        \quad $z_n = \prox_{g} (x_n - \nabla f(x_n)) $ and run\\
        \texttt{repeat}\\
       $x_{n+1}  = x_n - \b_n (x_n - z_n)$\\
    \quad \texttt{break if}\\
    \qquad\qquad  $(f+g)(x_{n+1}) \leq (f+g)(x_n) - \b(g(x_n)-g(z_n)) - \b
    \lr{\nf{x_n},x_n-z_n}+\frac{\b}{2}\n{x_n-z_n}^2$ \\ 
    \quad    \texttt{update} $\b:= \th \b $\\
    \texttt{return $x_{n+1}$}
\end{flushleft}
As one can see, each iteration of this linesearch needs only a new
value $(f+g)(x_{n+1})$ and simple vector-vector computation. However, the main drawback is that it uses
quite conservative stepsizes. Because of this, we did not include this
method in our numerical experiments. But in any case this direction
seems to be very interesting.

\subsection{Numerical illustration}\label{numeric}
Our test problems include four optimization problems, saddle point
problem and one nonlinear variational inequality.  For the
optimization problems we present a comparison of all our algorithms
with PGM (projected gradient method with linesearch from Section
\ref{sec:comparison}), FISTA (accelerated proximal method with the
same linesearch), and FBF (Tseng's forward-backward-forward method as
described in Section~\ref{sec:comparison}).  For a variational
inequality we ran two variants of FBF with $\d=1$ and $\d=2$. For a
saddle point problem we additionally included into the comparison the primal-dual method
of Chambolle and Pock~\cite{chambolle2011first}. Computations\footnote{All codes can be found on \url{https://gitlab.icg.tugraz.at/malitsky/pegm}} were
performed using Python 2.7 on an Intel Core i3-2348M 2.3GHz running
64-bit Linux Mint 17.

For each problem we present plots (residuals vs iterations) and also
give numerical illustration of the efficiency of the algorithms.

The parameters were chosen as follows
\begin{itemize}
    \item Alg.1, Alg.2:  $\a = 0.41$, $\sigma = 0.7$;
    \item Alg.3: $\a = 0.41$, $\th = 2$, $\sigma = 0.7$;
    \item  PGM and FISTA:  $\b = 0.7$, $\la_0 = 1$;
    \item FBF: $\b = 0.7$, $\th = 0.9$.
\end{itemize} 
We did not set $\la_{max}$ for our methods, since it is rather a
theoretical requirement.  For our methods as well as for FBF we used
the initialization procedure as described in Remark~\ref{rem:init}. Also note that $\sigma$
in our methods and $\b$ in FBF, PGM, and FISTA play the same roles,
that is why we choose them equal. The initial stepsize $\la_0 = 1$ for
FISTA and PGM was chosen larger that it was predicted by the
linesearch.

In many examples below we used a random generated data. Usually we ran several
experiments with the same distribution and if there was no large
discrepancy, we chose one sample from these experiments for the
presentation. Also for some of the problems we intentionally took
starting points that were quite far from a solution in order to make a
problem harder. 
   
 \subsection*{Constrained minimization}
Consider the following  minimization problem
\begin{equation}
    \label{cons_min:1}
    \min_{x\in C} f(x) = \sum_{i=1}^{d}q_i (e^{x_i} - x_i - 1) + \frac 1 2 \n{x}^2,
\end{equation}
where $x \in \R^d$, $q \in \R^d_+$ and $C = \{z\in \R^d \colon
\n{z}\leq 100\}$. Clearly, problem~\eqref{cons_min:1} is an instance
of \eqref{min} with $g(x) = \d_C(x)$.  Since the set $C$ is compact,
$\nabla f$ is Lipschitz-continuous on it. However, $\nabla f$ changes
quite fast and hence, our method is in the advantageous
situation. Note that $f$ is strongly convex.  We took $d = 10$, and
generated $q$ uniformly randomly from $(0,1000)^d$. The starting point
 $x_0$ was chosen uniformly randomly from $(-50,50)^d$.  The results
 are presented on Fig.~\ref{fig:cons_geom} and in
 Table~\ref{tab:cons_geom}.
    
\begin{figure}
    \centering
    \begin{subfigure}[b]{0.4\textwidth}
        \includegraphics[width=\textwidth]{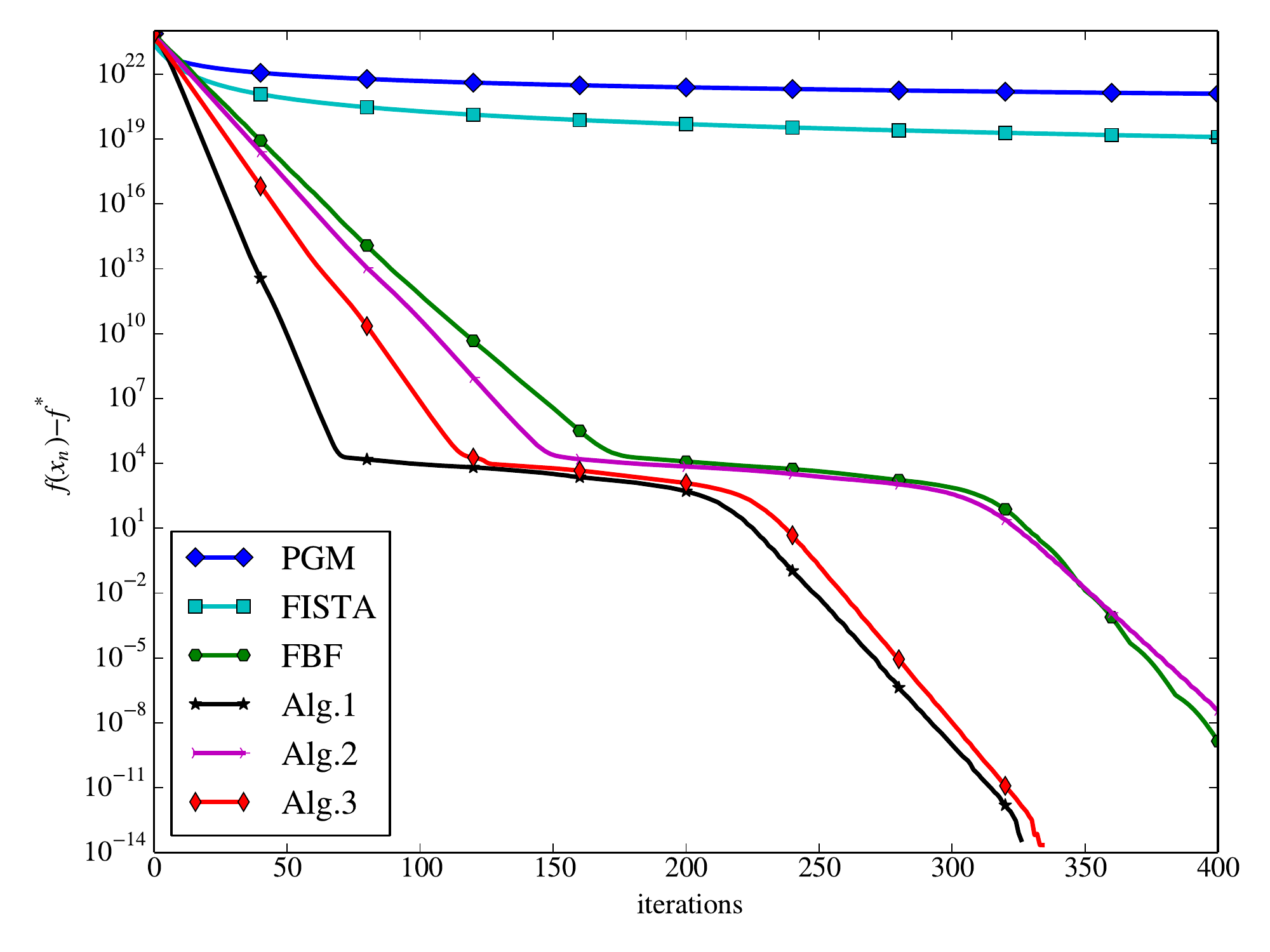}
    \end{subfigure}
        \centering
    \begin{subfigure}[b]{0.4\textwidth}
        \includegraphics[width=\textwidth]{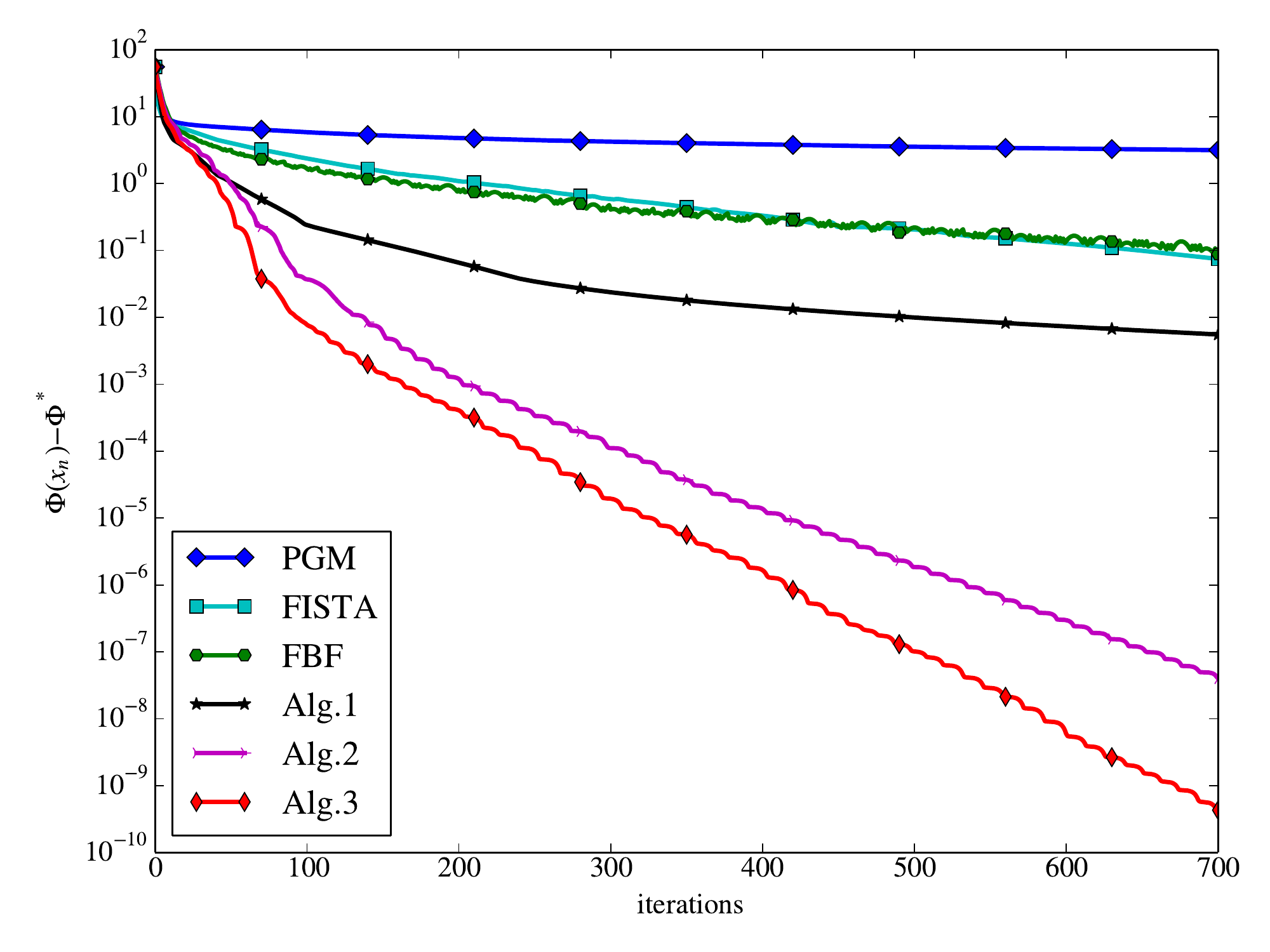}
    \end{subfigure}
        \caption{Results for problems \eqref{cons_min:1}
        (left) and \eqref{geo:1}  (right)}
        \label{fig:cons_geom}
\end{figure}
 
\begin{table}\centering
    \footnotesize
\begin{tabular}{|l|rrrrr||rrrrr|}
    \hline
    & \multicolumn{5}{c||}{Constrained minimization
    \eqref{cons_min:1}} & \multicolumn{5}{c|}{Geometric programming \eqref{geo:1}}  \\
    &  \#iter  & \#$f$  & \#grad  &  \#prox  &   time   &  \#iter  & \#$f$  & \#grad  &  \#prox  &   time \\ \hline                                                          
    PGM     &     400  &  553  &       553  &     553  &  0.04  &     700  &   720  &       720  &     720  &  0.14  \\
    FISTA   &     400  &  953  &       553  &     553  &  0.06  &     700  &  1420  &       720  &     720  &  0.14 \\
    FBF     &     400  &    0  &      1446  &    1045  &  0.06  &     700  &     0  &      2746  &    2045  &  0.20 \\
    Alg.1  &     400  &    0  &       608  &     400  &  0.04   &     700  &     0  &       708  &     700  &  0.10\\
    Alg.2  &     400  &    0  &       700  &     400  &  0.04   &     700  &     0  &      1472  &     700  &  0.13\\
    Alg.3  &     400  &    0  &       626  &     400  &  0.04   &     700    &     0  &      1293  &     700  &  0.13\\
 \hline
\end{tabular}\caption{Results for problems \eqref{cons_min:1} and
\eqref{geo:1}}  \label{tab:cons_geom}
\end{table}

In fact, for this particular problem FBF and our proposed methods are almost equal regarding a
speed of convergence. With different input data each
of the fore-mentioned algorithms might show the best performance. However,
 our algorithms require much less evaluation.

\subsection*{Geometric programming} We consider a canonical example of geometric programming
\cite{boyd:convex_optim} for which we add $l_1$--norm:
\begin{equation}
    \label{geo:1}
    \min_{x\in \R^d} \Phi(x):=\sum_{i=1}^m e^{\lr{a_i,x}+b_i} + \lr{c,x} + \n{x}_1,
\end{equation}
where $a_i,c \in \R^d$, $b\in \R^m$. Obviously, \eqref{geo:1} is a particular case of \eqref{min} with
$$ f(x) =\sum_{i=1}^m e^{\lr{a_i,x}+b_i} + \lr{c,x}, \quad g(x) = \n{x}_1.$$
Clearly, $\nabla f(x)$ is not Lipschitz-continuous.  We took $d = 100$, $m = 50$ and generated data
$a_i$, $b$ and $c$ uniformly randomly from $(0,1)^d$, $(-1,1)^m$, and $(-1,1)^d$ respectively. The starting
point was chosen as $x_0 = (0,\dots, 0)$.  The results are presented
on Fig.~\ref{fig:cons_geom} and in Table~\ref{tab:cons_geom}.
  
Alg.1 shows the worst performance among proposed methods. In fact, there is
no theoretical  guarantee for its convergence for this problem. FBF with $\d=2$ behaves similarly to FISTA and
requires too much evaluation in contrast with our methods. However,
FBF with $\d = 1$ behaved even worse as it almost coincided with PGM.
  
\subsection*{Analytic center}  
Suppose that set $C$ is a solution set of the following system of convex inequalities
$$f_i(x)\l 0, \quad i = 1,\dots, m. $$
The analytic center of the $C$ is defined as an optimal point of the problem
\begin{equation}
    \label{ac:1}
    \min_{x\in \R^d}  \sum_{i=1}^m -\log (-f_i(x))
\end{equation}
This is a convex unconstrained minimization problem that is an instance of~\eqref{min} with $g(x) =
0$. However, in general $\nabla f$ is not Lipschitz-continuous.

In our experiment we seek the analytic center of $C$ that is defined
as a polyhedron. In other words, we set $f_i(x) = \lr{a_i,x}-b_i$ for
$i = 1,\dots, m$, where $a_i\in \R^d$, $b_i\in \R$.  For a particular
example we took $d = 100$, $m = 1000$, and generated $a_i$ uniformly randomly
from $[-1,1]^d$. First $100$ coordinates of $b$ we set to $0.01$, the
rest to $100$. On the one hand, this guarantees that $x_0 = (0,\dots,
0)$ belongs to $C$. And on the other hand, this makes $x_0$ close to some
 vertex of $C$ and hence, probably far from the analytic
 center. Because of this choice, $\nabla f$ changes very fast and PGM
 and FISTA do not give satisfactory results. The results are presented
on Fig.~\ref{fig:ac_l_p} and in Table~\ref{tab:ac_l_p}. Since this
is an unconstrained problem, we ran Alg.1 using Remark~\ref{rem:C_affine}.

Similarly to the previous problem, FBF with $\d=2$ requires too much
evaluation but with $\d=1$ its performance was quite poor (the same as
 FISTA and PGM).

\begin{figure}
    \centering
    \begin{subfigure}[b]{0.4\textwidth}
       \includegraphics[width=\textwidth]{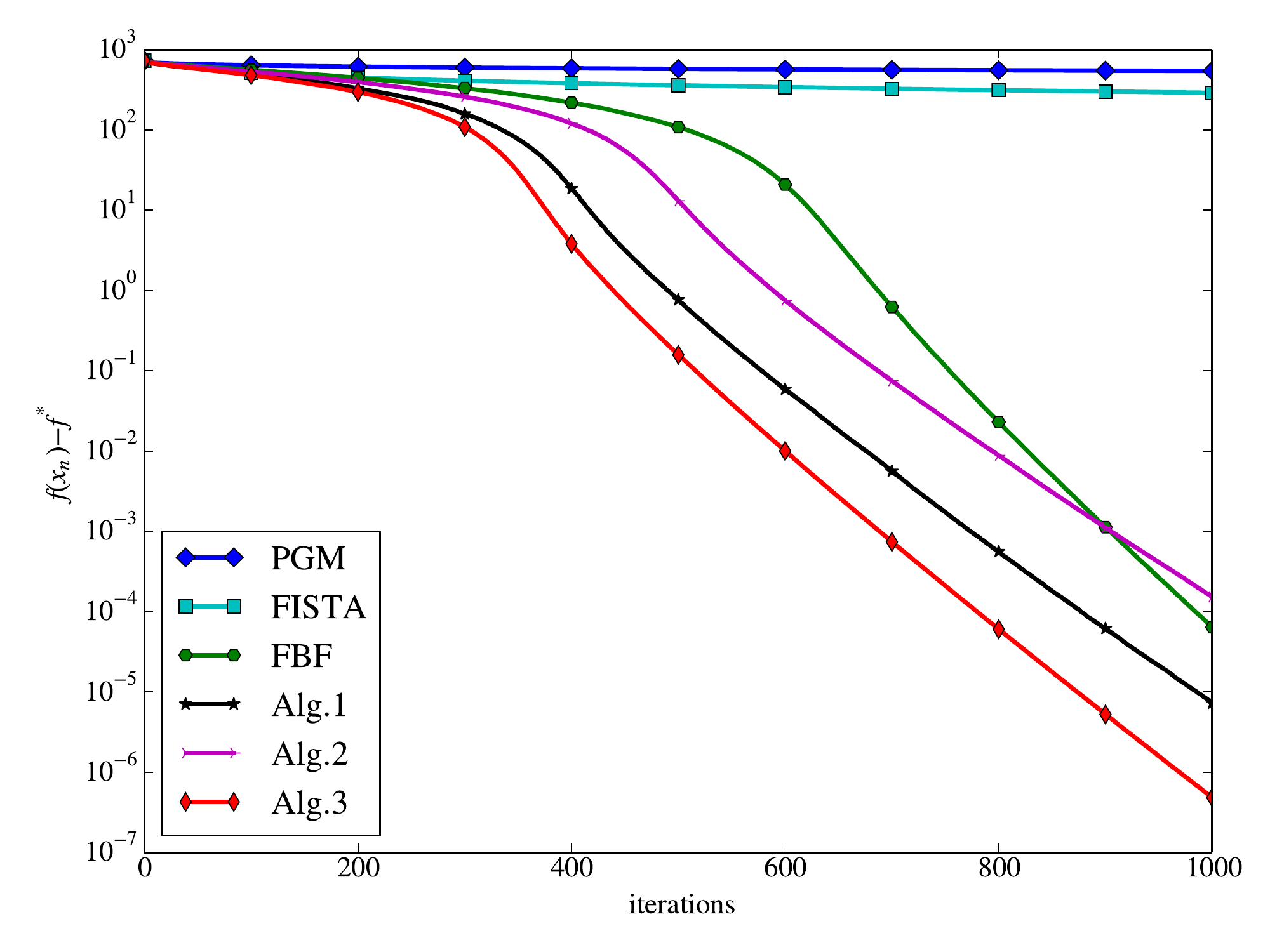}
    \end{subfigure}
        \centering
    \begin{subfigure}[b]{0.4\textwidth}
        \includegraphics[width=\textwidth]{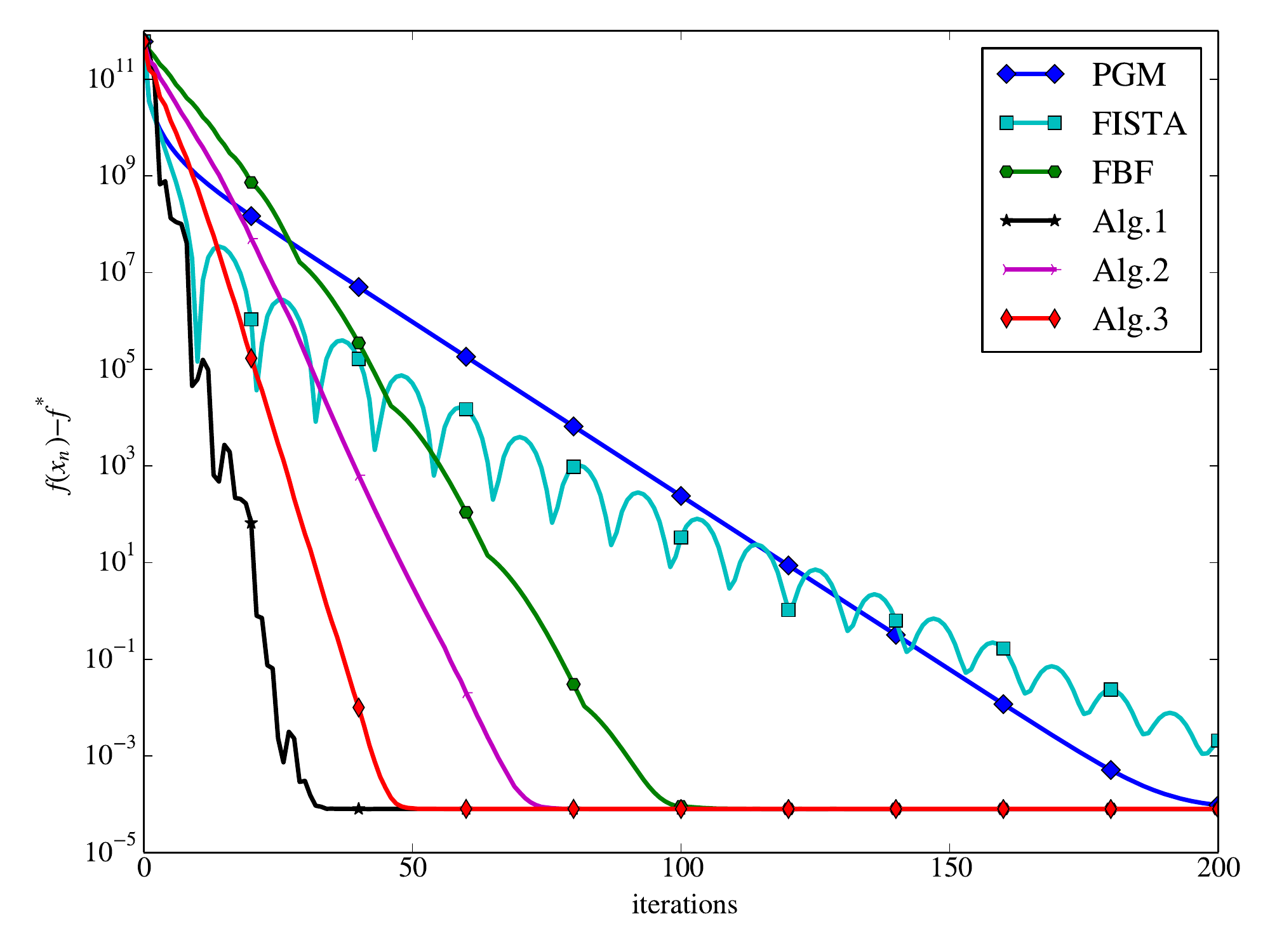}        
    \end{subfigure} 
    \caption{Results for problems \eqref{ac:1} (left) and 
   \eqref{eq:l_p} (right)}\label{fig:ac_l_p}
\end{figure}

\begin{table}\centering
    \footnotesize
\begin{tabular}{|l|rrrrr||rrrrr|}
    \hline
    & \multicolumn{5}{c||}{Analytic center \eqref{ac:1}} & \multicolumn{5}{c|}{$l_p$--minimization \eqref{eq:l_p} }  \\
         &  \#iter  & \#$f$  & \#grad  &  \#prox  &   time   &  \#iter  & \#$f$  & \#grad  &  \#prox  &   time \\ \hline                                                          
 PGM     &    1000  &  1044  &      1044  &    1044  &  0.96  &     200  &  235  &       235  &     235  &  0.04  \\ 
 FISTA   &    1000  &  2044  &      1044  &    1044  &  1.31  &     200  &  435  &       235  &     235  &  0.06  \\ 
 FBF     &    1000  &     0  &      3908  &    2907  &  1.83  &     200  &    0  &       784  &     583  &  0.06  \\ 
 Alg. 1  &    1000  &     0  &      1456  &    1000  &  0.89  &     200  &    0  &       312  &     200  &  0.04  \\ 
 Alg. 2  &    1000  &     0  &      1968  &    1000  &  1.11  &     200  &    0  &       405  &     200  &  0.04  \\ 
 Alg. 3  &    1000  &     0  &      1769  &    1000  &  1.08  &     200  &    0  &       369  &     200  &  0.04  \\ 
\hline
\end{tabular}\caption{Results for problems \eqref{ac:1}  and \eqref{eq:l_p}}  \label{tab:ac_l_p}
\end{table}

\subsection*{$l_p$--minimization} Consider a problem of
$l_p$--minimization
\begin{equation}
    \label{eq:l_p}
    \min_{x \in \R^d} f(x) = \frac 1 p \sum_{i=1}^m \n{x-a_i}^p,
\end{equation}
where $a_i \in \R^d$. It is clear that for $p\geq 2$ \eqref{eq:l_p} is
an instance of \eqref{min}. For a particular case $p = 2$ this a
well-known Fermat-Weber problem. We are interested in case when $p>2$
because this choice makes $\nabla f$ nonlinear. For a numerical
experiment we choose $d =50$, $m = 50$, $p = 3$ and generated points $a_i$
uniformly randomly from $[-100, 100]^d$. the starting points $x_0$ was
chosen uniformly randomly from $[-1000,1000]^d$. Although it is clear
that a solution of \eqref{eq:l_p} belongs to the convex hull of
$(a_i)$, we intentionally choose $x_0$ as a random point from the larger
range in order to make the problem harder. The results are presented
on Fig.~\ref{fig:ac_l_p} and in Table~\ref{tab:ac_l_p}. As previously,
this is an unconstrained problem, so we ran Alg.1 with Remark~\ref{rem:C_affine}.

\subsection*{Sun's problem} Now consider a variational inequality that
is not an instance of the optimization problem. We study a nonlinear VI, proposed by
Sun \cite{sun:94} 
\begin{equation}
    \label{eq:sun}
    \lr{F(x^*), x-x^*} \geq 0 \quad \forall x \in C,
\end{equation}
 where
 \begin{align*}
    F(x)   & = F_1(x) + F_2(x),\\
    F_1(x) & = (f_1(x),f_2(x),\dots,   f_d(x)),\\
    F_2(x) & = Dx+c, \\
    f_i(x) & = x_{i-1}^2 + x_i^2 +  x_{i-1}x_i + x_i x_{i+1},\quad   i=1,2,\dots, d,\\
    x_0 & = x_{d+1} = 0,
  \end{align*}
  Here $D$ is a square matrix $d\times d$ defined by condition
$$d_{ij}=
\begin{cases}
  4, & i = j,\\
  1, & i-j=1,\\
  -2,& i-j = -1,\\
  0, & \text{otherwise},
\end{cases}
$$
and $c=(-1,-1,\dots, -1)$. We defined the feasible set as
$C = [0,100]^d$. In the experiment we took $d = 1000$ and the starting
point was chosen uniformly randomly from $C$. We ran two variants of
Tseng's method with $\d = 1$ (TBF-1) and $\d = 2$ (TBF-2).  For the
comparison we used the residual $||x_n - P_C(x_n - F(x_n))||$.  The
results are presented on Fig.~\ref{fig:sun} and in Table~\ref{tab:sun}.
\begin{figure}
    \centering
    \includegraphics[width=0.4\textwidth]{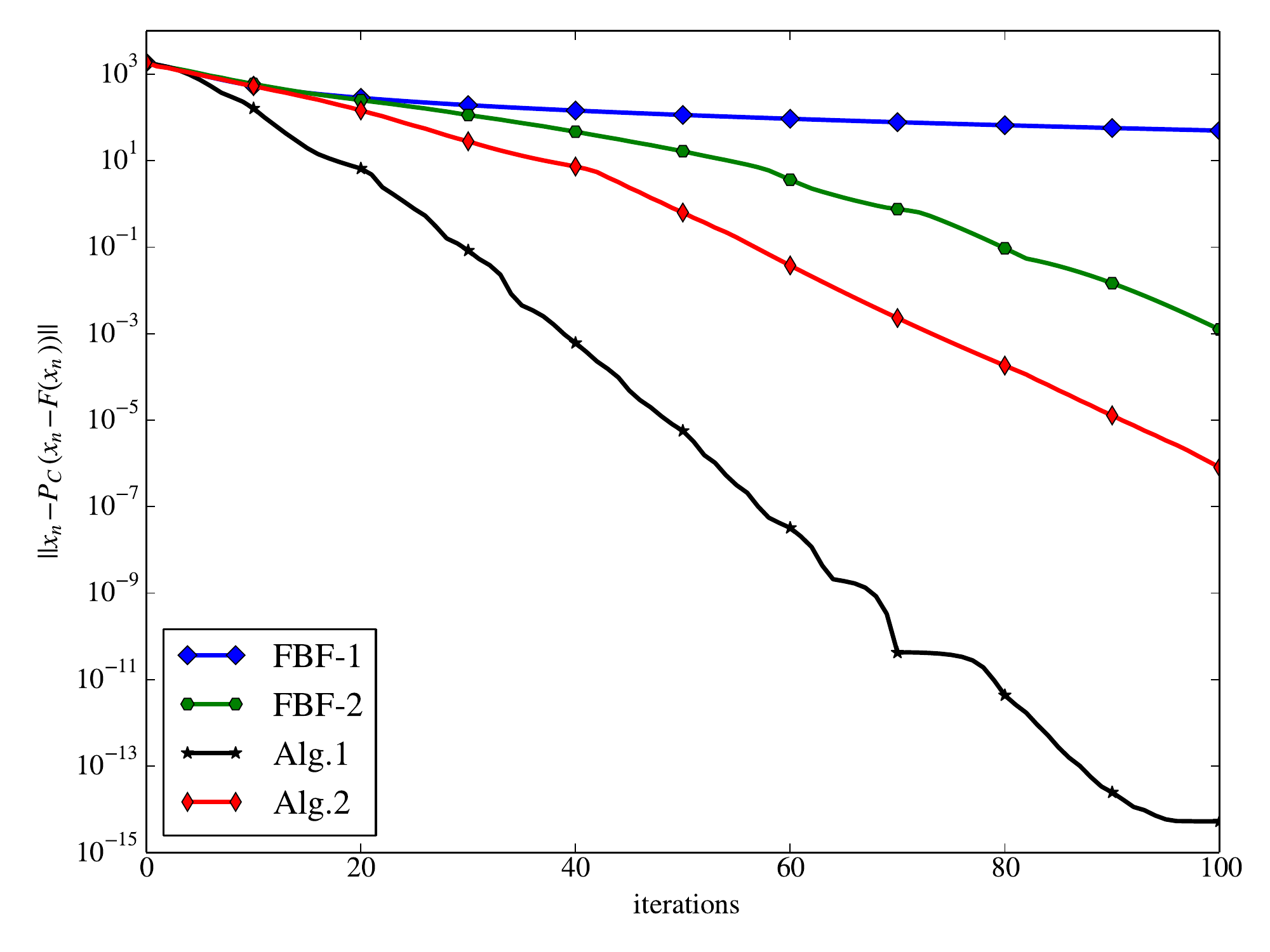}
        \caption{Results for problem~\eqref{eq:sun}
}\label{fig:sun}
\end{figure}

\begin{table}\centering
    \footnotesize
\begin{tabular}{|l|rrrr|}
\hline
         &  \#iter  &  \# $F$  &  \#prox  &  time  \\
\hline                                                 
 FBF-1   &     100  &       201  &     100  & 30.6  \\
 FBF-2   &     100  &       384  &     283  & 52.8  \\
 Alg. 1  &     100  &       228  &     100  & 37.7  \\
 Alg. 2  &     100  &       191  &     100  & 30.9  \\
 \hline
\end{tabular}
\caption{Results for problem~\eqref{eq:sun}}    \label{tab:sun}
\end{table}

\subsection*{Matrix game} We are interested in the following min-max
matrix game
\begin{equation}
    \label{eq:minmax}
    \min_{x \in \D_l}\max_{y\in \D_k} \lr{Ax, y},
\end{equation}
where $x\in \R^l$, $y\in \R^k$, $A\in \R^{k\times l}$, and $\Delta_k$,
$\D_l$ denote the standard unit simplices in $\R^k$ and $\R^l$
respectively. 
The problem \eqref{eq:minmax} is equivalent to the following
variational inequality
$$\lr{F(z^*), z-z^* } + g(z)-g(z^*) \geq 0$$
with $$z = \binom{x}{y}, \quad F(z) = \binom{A^T y}{-Ax},\quad g(z) = \binom{P_{\D_l}x}{P_{\D_k}y}.$$
As one can see, operator $F$ is linear, so we can run our methods
Alg.1 and Alg.2 using Remark~\ref{rem:F_affine}. 
In addition to FBF, we compared our methods
with the primal-dual algorithm of Chambolle and Pock (PD). For that method
we used fixed stepsizes $\tau = \sigma = \frac{1}{||A||}$ (as in paper \cite{pock:ergodic}). 
In our experiment we took $k = 1000$, $l = 2000$ and generated two
instances of matrix $A$ with entries (a) uniformly distributed  and
(b) normally distributed in $[-1,1]$.

The starting point for both cases was chosen as $x_0 =
\frac{1}{k}(1,\dots, 1)$ and $y_0 = \frac 1 l (1,\dots, 1)$. In order to compute projection onto the unit
simplex we used the algorithm from \cite{duchi2008efficient}.
For a comparison we used a primal-dual gap $\mathcal G(x,y)$ which can
be easily 
computed for a feasible pair $(x,y)$
$$\mathcal G(x,y) = \max_i (Ax)_i - \min_j (A^Ty)_j.$$
Since iterates obtained by Tseng's method may be infeasible,
for a computation of primal-dual gap in this case we used the auxiliary point
$z$ that is obtained by linesearch (see Section~\ref{sec:comparison}). As in the previous
example,  we ran two
variants of FBF with $\delta = 1$ and $\delta = 2$. For this problem,
instead of  $\# F$, we counted the number of matrix-vector
multiplication $\# \text{mult}$. Two projections onto simplices $\D_k$ and $\D_l$
respectively we counted as one $\text{prox}$.

When entries of $A$ were uniformly distributed, 
all algorithms behaved almost equally. Regarding the cost of
one iteration, FBF methods are more expensive than other
algorithms. Although the performance of PD, Alg.1, and Alg.2 was the
same, the former extremely depends on the value of $||A||_2$. It ran
much slower when we used $||A||_F$ instead of $||A||_2$. But
in case of a huge scale problem evaluating of $||A||_2$ will be also quite resource-intensive.

When entries of $A$ were normally distributed, Alg.2
showed much better performance. And in both cases our algorithms
required the same amount of computation as PD method. It is important
to note that
using FBF for this problem with $\d = 2$ makes the things only worse:
it gives almost nothing for the speed of convergence and instead
 uses much more computational resources. 
 \begin{figure}
    \centering
    \begin{subfigure}[b]{0.4\textwidth}
        \includegraphics[width=\textwidth]{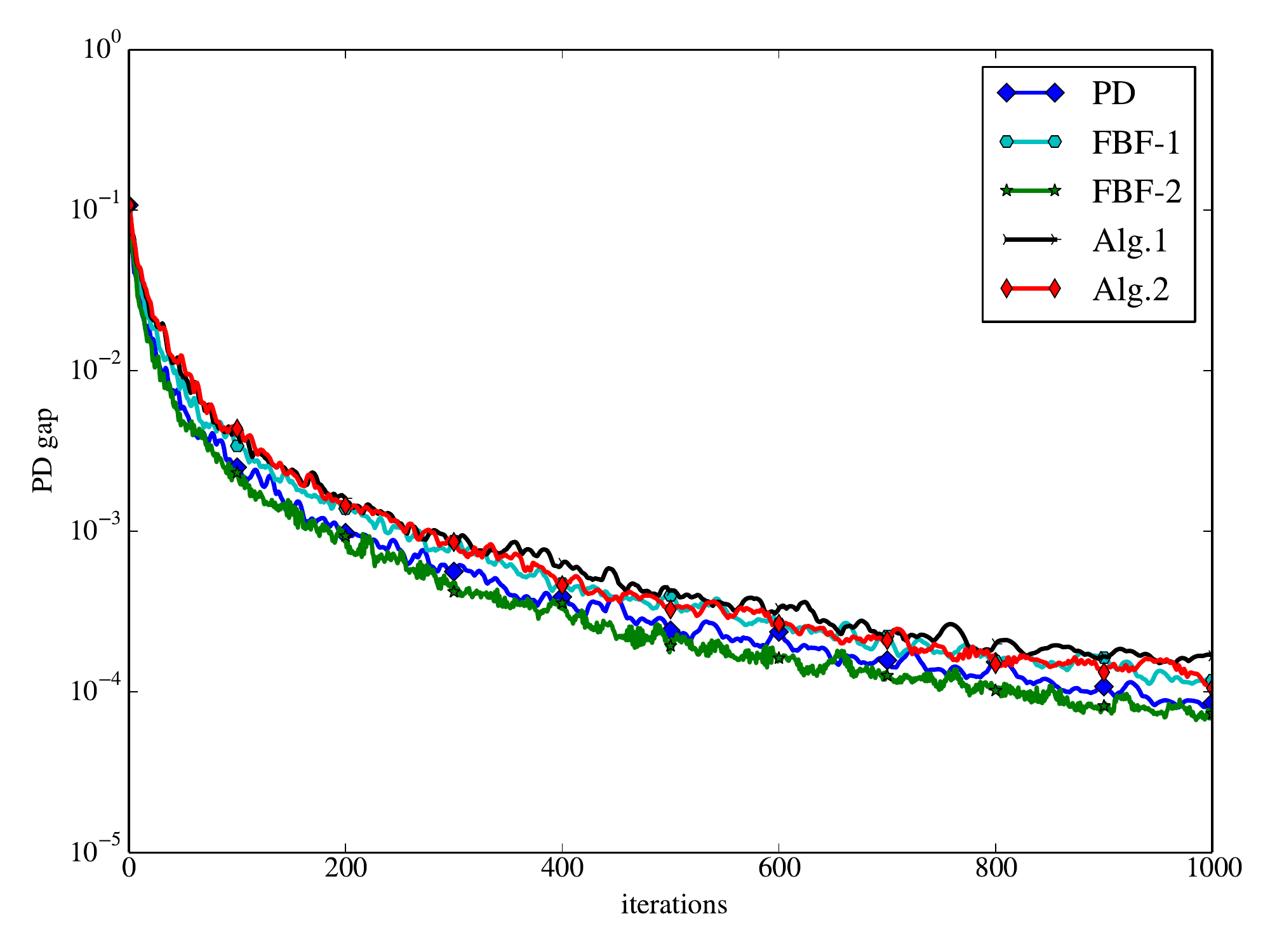}
    \end{subfigure}
        \centering
    \begin{subfigure}[b]{0.4\textwidth}
        \includegraphics[width=\textwidth]{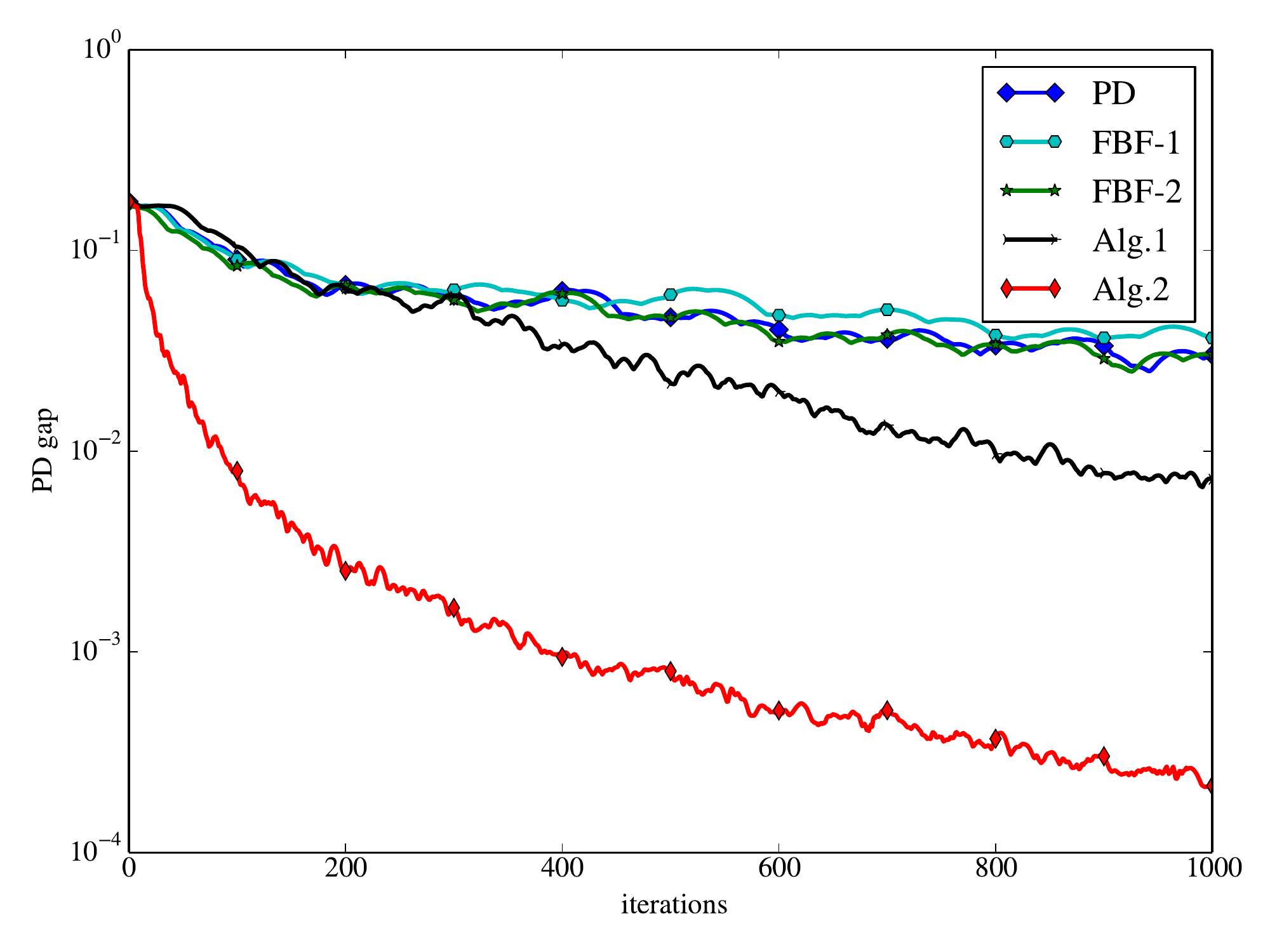}
    \end{subfigure}
        \caption{Results for problem \eqref{eq:minmax}: uniform case
        (left) and normal case
    (right)}\label{fig:minmax}
\end{figure}

\begin{table}\centering
    \footnotesize
\begin{tabular}{|l|rrrr||rrrr|}
    \hline
    & \multicolumn{4}{c||}{Uniform} & \multicolumn{4}{c|}{Normal}  \\
         &  \#iter  & \#mult   &  \#prox  &   time   &  \#iter  &   \#mult    &  \#prox  &   time \\ \hline
PD      &    1000  &    2000  &    1000  & 53.0  &    1000  &    2000  &    1000  &  53.3    \\ 
FBF-1   &    1000  &    4006  &    1002  & 77.3  &    1000  &    4004  &    1001  &  78.2  \\ 
FBF-2   &    1000  &    7892  &    2945  & 129.4 &    1000  &    7888  &    2943  &  128.4 \\ 
Alg. 1  &    1000  &    2004  &    1000  & 51.9  &    1000  &    2004  &    1000  &  51.8  \\ 
Alg. 2  &    1000  &    2004  &    1000  & 52.6  &    1000  &    2004        
&    1000  &    52.7  \\
\hline
\end{tabular}\caption{Results for problems \eqref{eq:minmax}}  \label{tab:3}
\end{table}

\section{Conclusion}
In this paper there were proposed several algorithms for a general
monotone variational inequality and a composite minimization
problem. All methods use some simple linesearch procedure that allow
to incorporate a local information of the operator. For all methods
there was established the ergodic rate of convergence. Numerical
experiments also approved their efficiency. Namely, we show that our
methods can outperform proximal gradient method or FISTA with standard
linesearch. Quite interesting is that the proposed methods become
extremely simple when the operator is affine. In particular, for a
minimax problem this makes the cost of one iteration the same as in
the primal-dual algorithm and, on the other hand, may allow to use
larger stepsizes. The requirement only of local Lipschitz-continuity
of the operator makes our methods very general.

As numerical simulations showed, the ratio $\# F /\# \text{iter}$
(number of evaluation of the operator to the number of iterations) is
almost always less than $2$. In the same time, this ratio for the
extragradient method or forward-backward-forward method equals $2$
even when they do not use any linesearch. Moreover, the ratio $\#
\text{prox} / \#\text{iter}$ for our methods always equals $1$.

The main drawback of the proposed methods is that we need the bound
$\a = \sqrt 2 -1$. This multiplier makes the steps smaller in case
when the Lipschitz constant of the operator does not change too
much. It is interesting to study whether this bound can be increased.

% \acknowledgment
\vspace{0.5cm}
{\noindent
{\bfseries Acknowledgement:} The author acknowledges support from the Austrian science fund (FWF) under the
project "Efficient Algorithms for Nonsmooth Optimization in Imaging" (EANOI) No. I1148.}

\bibliographystyle{siam}       
\bibliography{publication} 

\begin{thebibliography}{10}

\bibitem{bb-gradient}
{\sc J.~Barzilai and J.~M. Borwein}, {\em Two-point step size gradient
  methods}, IMA Journal of Numerical Analysis, 8 (1988), pp.~141--148.

\bibitem{baucomb}
{\sc H.~H. Bauschke and P.~L. Combettes}, {\em Convex Analysis and Monotone
  Operator Theory in Hilbert Spaces}, Springer, New York, 2011.

\bibitem{fista}
{\sc A.~Beck and M.~Teboulle}, {\em A fast iterative shrinkage-thresholding
  algorithm for linear inverse problem}, SIAM Journal on Imaging Sciences, 2
  (2009), pp.~183--202.

\bibitem{birgin-raydan}
{\sc E.~G. Birgin, J.~M. Mart{\'\i}nez, and M.~Raydan}, {\em Nonmonotone
  spectral projected gradient methods on convex sets}, SIAM Journal on
  Optimization, 10 (2000), pp.~1196--1211.

\bibitem{borwein_zhu}
{\sc J.~M. Borwein and Q.~J. Zhu}, {\em Techniques of Variational Analysis},
  Springer, 2005.

\bibitem{boyd:convex_optim}
{\sc S.~Boyd and L.~Vandenberghe}, {\em Convex Optimization}, Cambridge
  University Press, 2004.

\bibitem{reich:2011}
{\sc Y.~Censor, A.~Gibali, and S.~Reich}, {\em The subgradient extragradient
  method for solving variational inequalities in {H}ilbert space}, Journal of
  Optitmization Theory and Applications, 148 (2011), pp.~318--335.

\bibitem{chambolle2011first}
{\sc A.~Chambolle and T.~Pock}, {\em A first-order primal-dual algorithm for
  convex problems with applications to imaging}, Journal of Mathematical
  Imaging and Vision, 40 (2011), pp.~120--145.

\bibitem{pock:ergodic}
\leavevmode\vrule height 2pt depth -1.6pt width 23pt, {\em On the ergodic
  convergence rates of a first-order primal–dual algorithm}, Mathematical
  Programming (to appear),  (2015), pp.~1--35.

\bibitem{cohen}
{\sc G.~Cohen}, {\em Auxiliary problem principle extended to variational
  inequalities}, Journal of Optimization Theory and Applications, 59 (1988),
  pp.~325--333.

\bibitem{combettes08}
{\sc P.~L. Combettes and J.-C. Pesquet}, {\em A proximal decomposition method
  for solving convex variational inverse problems}, Inverse problems, 24
  (2008).

\bibitem{combettes:11}
{\sc P.~L. Combettes and J.-C. Pesquet}, {\em Proximal splitting methods in
  signal processing}, in Fixed-Point Algorithms for Inverse Problems in Science
  and Engineering, H.~H. Bauschke, R.~S. Burachik, P.~L. Combettes, V.~Elser,
  D.~R. Luke, and H.~Wolkowicz, eds., Springer Optimization and Its
  Applications, Springer New York, 2011, pp.~185--212.

\bibitem{cruz15}
{\sc J.~B. Cruz and T.~Nghia}, {\em On the convergence of the proximal
  forward-backward splitting method with linesearches}, arXiv:1501.02501,
  (2015).

\bibitem{semenov:extra_lipschitz}
{\sc S.~Denisov, V.~Semenov, and L.~Chabak}, {\em Convergence of the modified
  extragradient method for variational inequalities with non-{L}ipschitz
  operators}, Cybernetics and Systems Analysis, 51 (2015), pp.~757--765.

\bibitem{cevher:14_2}
{\sc Q.~T. Dinh, A.~Kyrillidis, and V.~Cevher}, {\em An inexact proximal
  path-following algorithm for constrained convex minimization}, SIAM Journal
  on Optimization, 24 (2014), pp.~1718--1745.

\bibitem{cevher:14}
\leavevmode\vrule height 2pt depth -1.6pt width 23pt, {\em Composite
  self-concordant minimization}, Journal of Machine Learning Research, 16
  (2015), pp.~371--416.

\bibitem{duchi2008efficient}
{\sc J.~Duchi, S.~Shalev-Shwartz, Y.~Singer, and T.~Chandra}, {\em Efficient
  projections onto the l 1-ball for learning in high dimensions}, in
  Proceedings of the 25th international conference on Machine learning, 2008,
  pp.~272--279.

\bibitem{ekeland:76}
{\sc I.~Ekeland and R.~Temam}, {\em Convex Analysis and Variational Problems},
  North-Holland, Amsterdam, Holland, 1976.

\bibitem{pang:03}
{\sc F.~Facchinei and J.-S. Pang}, {\em Finite-Dimensional Variational
  Inequalities and Complementarity Problems, Volume I and Volume II},
  Springer-Verlag, New York, USA, 2003.

\bibitem{gibali15}
{\sc A.~Gibali}, {\em A new non-{L}ipschitzian projection method for solving
  variational inequalities in {E}uclidean spaces}, Journal of Nonlinear
  Analysis and Optimization, 6 (2015), pp.~41--51.

\bibitem{harker-pang}
{\sc P.~T. Harker and J.-S. Pang}, {\em Finite-dimensional variational
  inequality and nonlinear complementarity problems: A survey of theory,
  algorithms and applications}, Mathematical Programming, 48 (1990),
  pp.~161--220.

\bibitem{iusem:97}
{\sc A.~N. Iusem and B.~F. Svaiter}, {\em A variant of {K}orpelevich's method
  for variational inequalities with a new search strategy}, Optimization, 42
  (1997), pp.~309--321.

\bibitem{khobotov}
{\sc E.~N. Khobotov}, {\em Modification of the extragradient method for solving
  variational inequalities and certain optimization problems}, USSR
  Computational Mathematics and Mathematical Physics, 27 (1989), pp.~120--127.

\bibitem{konnov07}
{\sc I.~V. Konnov}, {\em Equilibrium models and variational inequalities},
  Elsevier, Amsterdam, 2007.

\bibitem{korpel:76}
{\sc G.~M. Korpelevich}, {\em The extragradient method for finding saddle
  points and other problems}, Eko\-no\-mi\-ka i Matematicheskie Metody, 12
  (1976), pp.~747--756.

\bibitem{lin-xiao:2014}
{\sc Q.~Lin and L.~Xiao}, {\em An adaptive accelerated proximal gradient method
  and its homotopy continuation for sparse optimization}, Computational
  Optimization and Applications,  (2014), pp.~663--674.

\bibitem{lions-mercier}
{\sc P.~L. Lions and B.~Mercier}, {\em Splitting algorithms for the sum of two
  nonlinear operators}, SIAM Journal on Numerical Analysis, 16 (1979),
  pp.~964--979.

\bibitem{pock:inertial}
{\sc D.~Lorenz and T.~Pock}, {\em An inertial forward-backward algorithm for
  monotone inclusions}, Journal of Mathematical Imaging and Vision, 51 (2015),
  pp.~311--325.

\bibitem{lyashko11}
{\sc S.~I. Lyashko, V.~V. Semenov, and T.~A. Voitova}, {\em Low-cost
  modification of {K}orpelevich's method for monotone equilibrium problems},
  Cybernetics and Systems Analysis, 47 (2011), pp.~631--639.

\bibitem{malitsky15}
{\sc Y.~Malitsky}, {\em Reflected projected gradient method for solving
  monotone variational inequalities}, SIAM Journal on Optimization, 25 (2015),
  pp.~502--520.

\bibitem{malitsky_journ_glob_opt}
{\sc Y.~Malitsky and V.~Semenov}, {\em A hybrid method without extrapolation
  step for solving variational inequality problems}, Journal of Global
  Optimization, 61 (2015), pp.~193--202.

\bibitem{mal-sem}
{\sc Y.~V. Malitsky and V.~V. Semenov}, {\em An extragradient algorithm for
  monotone variational inequalities}, Cybernetics and Systems Analysis, 50
  (2014), pp.~271--277.

\bibitem{moudafi:03}
{\sc A.~Moudafi and M.~Oliny}, {\em Convergence of a splitting inertial
  proximal method for monotone operators}, Journal of Computational and Applied
  Mathematics, 155 (2003), pp.~447--454.

\bibitem{nemirovski2004prox}
{\sc A.~Nemirovski}, {\em Prox-method with rate of convergence ${O}(1/t)$ for
  variational inequalities with {L}ipschitz continuous monotone operators and
  smooth convex-concave saddle point problems}, SIAM Journal on Optimization,
  15 (2004), pp.~229--251.

\bibitem{nesterov1983}
{\sc Y.~Nesterov}, {\em A method of solving a convex programming problem with
  convergence rate ${O}(1/k^2)$}, Soviet Mathematics Doklady, 27 (1983),
  pp.~372--376.

\bibitem{nesterov07}
\leavevmode\vrule height 2pt depth -1.6pt width 23pt, {\em Gradient methods for
  minimizing composite objective function}, CORE discussion paper 2007076,
  Universit\'e Catholique de Louvain, Center for Operations Research and
  Econometrics,  (2007).

\bibitem{donoghue-candes:2013}
{\sc B.~O'Donoghue and E.~Candes}, {\em Adaptive restart for accelerated
  gradient schemes}, Foundations of computational mathematics,  (2013),
  pp.~1--18.

\bibitem{boyd:proximal}
{\sc N.~Parikh and S.~Boyd}, {\em Proximal algorithms}, Foundations and Trends
  in Optimization, 1 (2014).

\bibitem{passty79}
{\sc G.~B. Passty}, {\em Ergodic convergence to a zero of the sum of monotone
  operators in {H}ilbert space}, J. Math. Anal. Appl,  (1979), pp.~383--390.

\bibitem{polyak}
{\sc B.~{Polyak}}, {\em {Some methods of speeding up the convergence of
  iteration methods.}}, {U.S.S.R. Computational Mathematics and Mathematical
  Physics}, 4 (1967), pp.~1--17.

\bibitem{popov:80}
{\sc L.~D. Popov}, {\em A modification of the {A}rrow-{H}urwicz method for
  finding saddle points}, Mathematical Notes, 28 (1980), pp.~845--848.

\bibitem{pustelnik2010proximal}
{\sc N.~Pustelnik, J.-C. Pesquet, and C.~Chaux}, {\em Proximal methods for
  image restoration using a class of non-tight frame representations}, in Proc.
  Eur. Sig. and Image Proc. Conf. Aalborg, Danmark, 2010.

\bibitem{scheinberg14}
{\sc K.~Scheinberg, D.~Goldfarb, and X.~Bai}, {\em Fast first-order methods for
  composite convex optimization with backtracking}, Foundations of
  Computational Mathematics, 14 (2014), pp.~389--417.

\bibitem{solodov:1999}
{\sc M.~V. Solodov and B.~F. Svaiter}, {\em A new projection method for
  variational inequality problems}, SIAM Journal on Control and Optimization,
  37 (1999), pp.~765--776.

\bibitem{solodov1996modified}
{\sc M.~V. Solodov and P.~Tseng}, {\em Modified projection-type methods for
  monotone variational inequalities}, SIAM Journal on Control and Optimization,
  34 (1996), pp.~1814--1830.

\bibitem{sun:94}
{\sc D.~Sun}, {\em A projection and contraction method for the nonlinear
  complementarity problems and its extensions}, Mathematica Numerica Sinica, 16
  (1994), pp.~183--194.

\bibitem{tseng00}
{\sc P.~Tseng}, {\em A modified forward-backward splitting method for maximal
  monotone mappings}, SIAM Journal on Control and Optimization, 38 (2000),
  pp.~431--446.

\bibitem{tseng08}
\leavevmode\vrule height 2pt depth -1.6pt width 23pt, {\em On accelerated
  proximal gradient methods for convex-concave optimization}, 2008.

\end{thebibliography}
\end{document}